\numberwithin{equation}{section}
\numberwithin{figure}{section}
\newcommand\R{\mathbb{R}}
\newcommand\Z{\mathbb{Z}}
\newcommand\lam{\lambda}
\newcommand\Lam{\Lambda}
\newcommand\Om{\Omega}
\newcommand\1{\mathds{1}}
\newcommand\eps{\varepsilon}
\renewcommand\le{\leqslant}
\renewcommand\ge{\geqslant}
\renewcommand\leq{\leqslant}
\renewcommand\geq{\geqslant}
\newcommand\sbt{\subset}
\newcommand{\ft}[1]{\widehat #1}
\newcommand{\dotprod}[2]{\langle #1 , #2 \rangle}
\newcommand{\supp}{\operatorname{supp}}
\newcommand{\cm}{\complement}
\theoremstyle{plain}
\newtheorem{thm}{Theorem}[section]
\newtheorem*{claim*}{Claim}
\newcommand{\thmref}[1]{Theorem~\ref{#1}}
\newcommand{\secref}[1]{Section~\ref{#1}}
\newcommand\del{\delta}
\theoremstyle{definition}
\newtheorem{definition}[thm]{Definition}
\newtheorem*{definition*}{Definition}
\newtheorem*{remarks*}{Remarks}
\newtheorem*{remark*}{Remark}
\newtheorem{remark}[thm]{Remark}
\newenvironment{enumerate-roman}
{\begin{enumerate}
\addtolength{\itemsep}{5pt}
}
{\end{enumerate}}
\newenvironment{enumerate-alph}
{\begin{enumerate}
\addtolength{\itemsep}{5pt}
}
{\end{enumerate}}
\newenvironment{enumerate-num}
{\begin{enumerate}
\addtolength{\itemsep}{5pt}
}
{\end{enumerate}}
\newenvironment{enumerate-text}
{\begin{enumerate}
\addtolength{\itemsep}{5pt}
}
{\end{enumerate}}
\begin{document}

\title{Geometric implications of weak tiling}

\author[M. Kolountzakis]{Mihail N. Kolountzakis}
\address{Department of Mathematics and Applied Mathematics, University of Crete, Voutes Campus, 70013 Heraklion, Greece and Institute of Computer Science, Foundation of Research and Technology Hellas, N. Plastira 100, Vassilika Vouton, 700 13, Heraklion, Greece}
\email{kolount@gmail.com}

\author[N. Lev]{Nir Lev}
\address{Department of Mathematics, Bar-Ilan University, Ramat-Gan 5290002, Israel}
\email{levnir@math.biu.ac.il}

\author[M. Matolcsi]{M\'at\'e Matolcsi}
\address{HUN-REN Alfr\'ed R\'enyi Institute of Mathematics, Re\'altanoda utca 13-15, H-1053, Budapest, Hungary and Department of Analysis and Operations Research, Institute of Mathematics, Budapest University of Technology and Economics, M\H uegyetem rkp. 3., H-1111 Budapest, Hungary}
\email{matomate@renyi.hu}

\dedicatory{Dedicated to the 100th anniversary of Bent Fuglede's birthday}
\date{September 12, 2025}
\subjclass[2020]{42B10, 42C05, 52C22}
\keywords{Fuglede's conjecture, spectral set, tiling, convex polytopes}
\thanks{N.L.\ is supported by ISF Grant No.\ 1044/21.
M.M.\ is supported by the Hungarian National Foundation for Scientific Research, Grants No. K146387 and KKP 133819.}

\begin{abstract}
The notion of weak tiling played a key role in the proof of Fuglede's spectral set conjecture for convex domains, due to the fact that every spectral set must weakly tile its complement. In this paper, we revisit the notion of weak tiling and establish some geometric properties of sets that weakly tile their complement. If $A \subset \mathbb{R}^d$ is a convex polytope, we give a direct and self-contained proof that $A$ must be symmetric and have symmetric facets. If $A \subset \mathbb{R}$ is a finite union of intervals, we give a necessary condition on the lengths of the gaps between the intervals.
\end{abstract}

\maketitle

\section{Introduction}

A bounded, measurable set $A \sbt \R^d$ is called
\emph{spectral} if it admits an orthogonal basis
consisting of  exponential functions.
Fuglede  famously conjectured \cite{Fug74} that $A$ is a spectral set
if and only if it can tile the space by translations.
This conjecture inspired extensive research over the years,
see \cite{Kol24} for the history of the
problem and an overview of the known (positive as well as negative)
related results.

A major recent result  states that the Fuglede conjecture
holds for convex domains in all dimensions  \cite{LM22}.
 A key role in the proof is played by the concept of weak tiling,
 introduced in the same paper as a relaxation of proper tiling.

\begin{definition}
We say that a bounded, measurable set $A \sbt \R^d$
\emph{weakly tiles its complement}  $A^\cm = \R^d \setminus A$,
if there  exists a positive, locally finite measure $\nu$
on $\R^d$ such that $\1_{A} \ast \nu = \1_{A^\cm}$ a.e.
In this case, $\nu$ is called a \emph{weak tiling measure}
for the set $A$.
\end{definition}

If the measure $\nu$ is a sum of unit masses, then
the weak tiling becomes a proper tiling of the complement $A^\cm$
by translated copies of $A$.

The role of weak tiling in the theory of spectral sets is
due to the fact that every spectral
set must weakly tile its complement, see \cite[Theorem 1.5]{LM22}.
This result can be viewed as a weak form of the
``spectral implies tile'' part of  Fuglede's conjecture. 
Note that there exist spectral sets which do not tile the space 
properly, as first constructed in \cite{Tao04}.

In the present paper we revisit the notion of weak tiling,
and establish some geometric properties of sets that weakly tile their complement.
In particular, these include all  the spectral sets, as well as the
sets which can tile properly by translations.

In \secref{sec2} we consider the case where $A$ is a convex polytope, and give a new self-contained proof of the fact that $A$ must be centrally symmetric and have centrally symmetric facets. In \secref{sec3} we consider the case where $A\subset \R$ is a finite union of intervals, and give a necessary condition on the lengths of the gaps appearing between the intervals. Finally, in \secref{sec4} we pose some open problems.


\section{Weak tiling by convex polytopes}
\label{sec2}

\subsection{}
In this section  we focus on  the case where
$A \sbt \R^d$ is a convex body, that is,
a compact, convex set with nonempty interior.
Note that  while in general, a set that weakly tiles
its complement need not tile properly 
(as an example, take any spectral set which does not tile),
it was shown in \cite{KLM23} that for the
class of convex bodies weak tiling implies tiling.

\begin{thm}[{\cite[Theorem 1.4]{KLM23}}]
\label{thmCVXBWT}
    Let $A$ be a convex body in $\R^d$, and assume that
    $A$ weakly tiles its complement. Then $A$ is a convex
 polytope which can also tile $\R^d$ properly by translations.
\end{thm}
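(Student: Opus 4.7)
The plan is to interpret the weak tiling identity via Fourier analysis, force $A$ to be polyhedral using the decay of $\hat{\1}_A$ for convex bodies, and then upgrade weak tiling to a proper translational tiling. First I would pass to the Fourier side. Setting $\mu := \delta_0 + \nu$, the identity $\1_A \ast \nu = \1_{A^\cm}$ becomes $\1_A \ast \mu = 1$ a.e., which on the distributional Fourier side reads
\[
\hat{\1}_A \cdot \hat\mu = c\,\delta_0
\]
for some $c>0$. Hence $\hat\mu$, restricted to $\R^d \setminus \{0\}$, is supported on the zero set $Z = \{\xi\neq 0 : \hat{\1}_A(\xi)=0\}$. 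Because $\1_{A^\cm}$ has bounded density, $\nu(B_R) = O(R^d)$, so $\mu$ is positive, locally finite, and of polynomial growth, making $\hat\mu$ a positive-definite tempered distribution. The positivity and polynomial growth of $\mu$ are the main levers.

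Next I would show that $A$ must be a polytope. Suppose it is not; then $\partial A$ contains a relatively open, $C^2$ smooth piece with at least one strictly positive principal curvature. Classical stationary-phase asymptotics yield $|\hat{\1}_A(t\omega)| \asymp t^{-(d+1)/2}$ along the outer normal direction $\omega$ at such a boundary point, with zeros arranged on a regular family of hypersurfaces transverse to $\omega$. Since $\hat\mu$ is a positive measure (away from $0$) supported on $Z$, testing the identity $\hat{\1}_A\,\hat\mu = c\,\delta_0$ against Gaussian bumps concentrated near $t\omega$ for large $t$ --- and exploiting the polynomial growth bound on $\mu$ --- should produce a quantitative contradiction. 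Ruling out non-polytopes is the principal obstacle of the proof, and it echoes the harmonic-analytic arguments used in the convex-body Fuglede proof \cite{LM22}.

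Once $A$ is known to be a convex polytope, the conclusions of \secref{sec2} provide that $A$ is centrally symmetric with centrally symmetric facets. To extract a proper tiling, I would argue that the positivity of $\hat\mu$, combined with the comparatively rigid structure of $Z$ for a polytope, forces $\hat\mu$ to be concentrated on a discrete dual lattice $\Lam^{*}$, so that $\mu$ is $\Lam$-periodic; a sub-sum of the atomic part of $\mu$ would then be identified as a proper tiling set for $A$. An alternative, more geometric route is induction on $d$: slicing $A$ by hyperplanes perpendicular to a facet normal should produce, via restriction of the weak tiling, a weak tiling by the corresponding facet in dimension $d-1$; induction together with the Venkov--McMullen criterion for translational tileability of polytopes (which requires, beyond central symmetry of facets, the belt condition on $(d-2)$-faces) would then upgrade the weak tiling of $A$ to a proper one. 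Either way, the bottleneck of the argument is the polytopality step.
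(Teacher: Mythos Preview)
There are two genuine gaps.

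First, you slide from ``$\hat\mu$ is a positive-definite tempered distribution'' (correct) to ``$\hat\mu$ is a positive measure away from $0$'' (incorrect), and you then lean on the latter both in the polytopality step and in the argument forcing a lattice structure. Positivity of $\mu$ gives only that $\hat\mu$ is positive-definite; already for $\mu$ a sum of two unit point masses, $\hat\mu$ is a complex-valued oscillating function, not a positive measure. The arguments in this paper never use positivity of $\hat\mu$; they use positivity of $\mu = \delta_0 + \nu$ on the spatial side, through inequalities of the form $\int \hat g\,d\nu \ge 0$ for test functions with $\hat g \ge 0$ --- this is precisely the engine in the proof of \thmref{thmB1}. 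Your stationary-phase instinct, that the zero set of $\hat{\1}_A$ is too sparse when $A$ is not a polytope, points in the right direction, but the mechanism you invoke to extract a contradiction is the wrong one. There is also a gap in the premise: a convex body that is not a polytope need not possess any $C^2$ boundary patch with a strictly positive principal curvature.

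Second, you do not handle the belt condition \ref{vm:iv}, which is an indispensable part of the Venkov--McMullen criterion. The paper does not give a self-contained proof of \thmref{thmCVXBWT}; it assembles it from known pieces: \ref{vm:i} from \cite[Theorem~4.1]{LM22}, \ref{vm:iv} from \cite[Theorem~6.1]{LM22}, and \ref{vm:ii}, \ref{vm:iii} from \thmref{thmB1} (whose direct proof is the paper's new contribution). Your first route to proper tiling collapses with the positivity error above. Your inductive route via slicing does not work as stated either: restricting the identity $\1_A \ast \mu = 1$ to a hyperplane does not yield a weak tiling of that hyperplane by the facet, because translates of $A$ with centers off the hyperplane still contribute to the slice. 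The actual proof of \ref{vm:iv} in \cite{LM22} is a separate geometric argument about belts of length greater than six, and nothing in your outline substitutes for it.
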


The proof of this result consists of several ingredients. First,
due to \cite{Ven54}, \cite{McM80}, in order
to prove that a convex body $A \sbt \R^d$ tiles  by translations,
 it suffices to show that $A$ satisfies the following four conditions:
\begin{enumerate-num}
\item \label{vm:i} $A$ is a convex polytope;
\item \label{vm:ii} $A$ is centrally symmetric;
\item \label{vm:iii} all the facets of $A$ are centrally symmetric;
\item \label{vm:iv} each belt of $A$ consists of either $4$ or $6$ facets.
\end{enumerate-num}
We recall that a \emph{facet} of a convex polytope $A \sbt \R^d$
 is a face of dimension $d-1$.

It was proved in \cite[Theorem 4.1]{LM22} that
if $A$ is a  convex body in $\R^d$, then
the weak tiling assumption implies condition \ref{vm:i}. It was also proved
\cite[Theorem 6.1]{LM22} that the
weak tiling assumption together with \ref{vm:i}, \ref{vm:ii}
and \ref{vm:iii} implies  \ref{vm:iv}.

Conditions  \ref{vm:ii} and  \ref{vm:iii} were
shown in \cite{Kol00}, \cite{KP02}, \cite{GL17}
to follow from the assumption
that the convex polytope $A$ is spectral, but these proofs
do not work if we only
assume that $A$ weakly tiles its complement.

The fact that
the weak tiling assumption together with \ref{vm:i}
implies both  \ref{vm:ii} and  \ref{vm:iii},
was proved in \cite{KLM23}, based on an elaborate machinery developed in \cite{LL21}. In fact, conditions \ref{vm:ii} and  \ref{vm:iii} were obtained as a consequence of \cite[Theorem 1.3]{KLM23},
which states that if a (convex or non-convex) polytope
$A \sbt \R^d$ weakly tiles its complement, then $A$ is equidecomposable
by translations to a cube of the same volume.

Our goal in the present section is to give a more direct and self-contained proof of
the fact that
the weak tiling condition together with \ref{vm:i}
implies both  \ref{vm:ii} and  \ref{vm:iii}.

\begin{thm}
\label{thmB1}
Let $A$ be a convex polytope in $\R^d$. If
$A$   weakly tiles its complement, then $A$ must be centrally symmetric and have
centrally symmetric facets.
\end{thm}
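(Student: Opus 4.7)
The plan is to extract both symmetry properties directly from the identity $\1_A * \mu = 1$ a.e., where $\mu := \delta_0 + \nu$ is a positive, locally finite measure obtained by rewriting $\1_A * \nu = \1_{A^c}$. Taking the distributional gradient and using that $\nabla \1_A = -\sum_i u_i \sigma_i$ (where $F_1, \ldots, F_n$ are the facets of $A$, $u_i$ the outward unit normals, and $\sigma_i$ the $(d-1)$-dimensional surface measure on $F_i$) yields the vector-valued distributional identity
\[
\sum_{i=1}^{n} u_i \, (\sigma_i * \mu) = 0. \qquad (\ast)
\]
A preliminary observation, obtained from $\1_A * \nu = 0$ on $\interior(A)$, is that $\nu$ must vanish on $\interior(A - A)$, so that $A$ and $A + v$ have disjoint interiors for every $v \in \supp \mu$.

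Fix a facet $F := F_1$ with outward normal $u$, and analyze $(\ast)$ on the relative interior of $F$. The $\delta_0$ summand of $\mu$ contributes $u \, \sigma_1$ there, and this must be cancelled by the remaining terms $u_j \, (\sigma_j * \nu)$. The disjointness constraint forces any $A + v$ with $v \in \supp \nu$ into the closed half-space $\{\langle x, u \rangle \geq h_A(u)\}$, so a contribution to $(\ast)$ supported on $F$ can only come from a translated facet $F_j + v$ lying entirely in the supporting hyperplane of $F$. Matching parallel hyperplanes, this requires $u_j = -u$ and $v \in H_u := \{v \in \R^d : \langle v, u \rangle = h_A(u) + h_A(-u)\}$. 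For the cancellation to carry $(d-1)$-dimensional density on $F$, the face $A^{-u}$ of $A$ in direction $-u$ must itself be a facet $F_k$, so that $A$ has an \emph{opposite facet} with outward normal $-u$. Moreover, $\nu$ must possess a $(d-1)$-dimensional singular component $\lambda \geq 0$ on $H_u$, and, after translating $F_k$ into the hyperplane of $F$, the cancellation on $F$ takes the form of a $(d-1)$-dimensional convolution identity
\[
\1_{F_k} * \lambda = \1_F \quad \text{on } F.
\]

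Applying the same reasoning starting from $F_k$ in place of $F$ yields a symmetric identity $\1_F * \lambda' = \1_{F_k}$ on $F_k$. Comparing total masses and supports, and using positivity, forces both $\lambda$ and $\lambda'$ to be single point masses; hence $F_k$ is a translate of $F$ with a translation vector determined by the widths of $A$. This pairing of every facet of $A$ with an opposite, congruent facet implies, via a classical theorem (Minkowski/Alexandrov), that $A$ is centrally symmetric. Once $A$ is centrally symmetric, the relation $F_k = -F + 2c$ converts the above convolution identity into $\1_{-F} * \tilde\lambda = \1_F$ on $F$, which is a $(d-1)$-dimensional weak tiling statement for $F$ inside its own affine hull. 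Induction on the dimension then yields that $F$ is itself centrally symmetric.

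The main obstacle is the ``single point mass'' conclusion for $\lambda$. The convolution identity $\1_{F_k} * \lambda = \1_F$ is known only on $F$, so ruling out more diffuse positive solutions requires exploiting the global identity $(\ast)$ outside $\partial A$: the vector combination of the $(\sigma_j * \nu)$'s must cancel everywhere off $\partial A$, and this structure, combined with the positivity of $\nu$ and the non-overlap condition, must be leveraged to force the desired geometric rigidity.
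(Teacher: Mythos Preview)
Your approach is genuinely different from the paper's and, as you yourself flag in the final paragraph, it is incomplete. The paper does \emph{not} argue geometrically on $\partial A$ at all: it works entirely on the Fourier side. From $\1_A*\mu=1$ and Wiener's tauberian theorem it deduces that $\ft{\mu}=m(A)^{-1}\delta_0$ on the open set where $\ft{\1}_A\ne0$. The geometric input is then \thmref{thmFA} (extracted from \cite{GL17}): if $A$ fails either symmetry condition, $\ft{\1}_A$ is zero-free on a set $S(v,R,\eps)$ that is a union of $\eps$-balls about $\{nv:n=0\text{ or }|n|\ge R\}$. A test function $g_N$ built from a Fej\'er kernel (so that $\supp g_N\subset S$ and $\ft{g}_N\ge0$) then violates the inequality $\int g_N\le m(A)^{-1}g_N(0)$ forced by $\ft\mu=m(A)^{-1}\delta_0$ on $S$ together with the positivity of $\nu$. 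No facet-by-facet analysis, no induction on $d$.

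Your outline has a real gap exactly where you place it, and it is not a minor one. From $\1_{F_k}*\lambda=\1_F$ on $F$ and $\1_F*\lambda'=\1_{F_k}$ on $F_k$ (even granting these identities) you cannot conclude that $\lambda,\lambda'$ are point masses, nor that $|F|=|F_k|$. Already for intervals in $\R$ the equation $\1_{[0,2]}*\lambda'=\1_{[0,1]}$ on $[0,1]$ is solved by any convex combination of $\delta_0$ and $\delta_{-1}$, so ``comparing total masses and supports'' does not force rigidity. Without $|F|=|F_k|$ for every facet pair you cannot invoke Minkowski's theorem, and the argument stalls before establishing central symmetry of $A$; the inductive step for the facets then has nothing to stand on.

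There are also earlier steps that are asserted rather than proved. The claim that only the facet with normal $-u$ can contribute $(d-1)$-dimensional mass on $H_F$ ignores the possibility that $\sigma_j*\nu$, with $u_j\ne\pm u$, produces a singular component on $H_F$: $\sigma_j$ lives on a hyperplane transverse to $H_F$, but convolving it with a $\nu$ that itself has one-dimensional density in the right direction can manufacture $(d-1)$-dimensional mass on $H_F$. Likewise, the passage from ``some cancellation on $F$'' to the clean scalar identity $\1_{F_k}*\lambda=\1_F$ on $F$, with $\lambda$ a well-defined nonnegative measure on $H_u$, presupposes a decomposition of $\nu$ near $H_u$ that you have not justified. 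These difficulties are precisely what the paper's Fourier-analytic route sidesteps: it never needs to understand the local structure of $\nu$ near $\partial A$, only the support of $\ft\mu$.
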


The proof requires some auxiliary results, which
we will introduce first.  Then, we will proceed with
the proof of \thmref{thmB1}.

\subsection{Zero-free regions of the Fourier transform}
If $f \in L^1(\R^d)$ then
its Fourier transform  is defined by
\begin{equation}
\label{eq:Z1.1}
\ft f (\xi)=\int_{\R^d} f (x) \, e^{-2\pi i\langle \xi,x\rangle} dx, \quad \xi \in \R^d.
\end{equation}

Given a vector $v \in \R^d$,
a positive integer $R$,  and $\eps > 0$,  we define the set
\begin{equation}
\label{eqSdef}
    S = S(v, R, \eps) =
    \{ nv + w : n \in \Z, \; |n| \notin \{1,2,\dots,R-1\}, \; w \in \R^d, \; |w|< \eps\}.
\end{equation}
The set $S$ is thus a union of open balls
of radius $\eps$ centered at integral multiples
of the vector $v$, but only those multiples $nv$
for which either $n=0$ or $|n| \ge R$.
We observe
that $S(v, R, \eps)$ is contained in a cylinder of width $\eps$ along the vector $v$.

\begin{thm}
\label{thmFA}
Let $A$ be a convex polytope in $\R^d$. Assume
that $A$ is not centrally symmetric, or that at least
one of the facets of $A$ is not centrally symmetric.
Then there exist
a nonzero vector $v \in \R^d$, a positive integer $R$
and $\eps > 0$, such that
the Fourier transform $\ft{\1}_A$
has no zeros in the set $S =  S(v, R, \eps)$.
\end{thm}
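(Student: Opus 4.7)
The plan is to reduce the problem to a one-dimensional Fourier analysis along a carefully chosen direction $v \in \R^d$. Decomposing $\R^d = \R v \oplus v^\perp$ and writing $x = s v + y$ with $y \in v^\perp$, we obtain for $\xi = tv + \eta$ with $\eta \in v^\perp$ the slice representation
$$\widehat{\1}_A(tv + \eta) = \int_\R G(s, \eta)\, e^{-2\pi i t s}\, ds, \qquad G(s, \eta) := \int_{A_s} e^{-2\pi i \langle \eta, y \rangle}\, dy,$$
where $A_s := \{y \in v^\perp : sv + y \in A\}$ is the cross-section. At $\eta = 0$, $G(s, 0) = g_v(s) := \mes_{d-1}(A_s)$ is a continuous piecewise polynomial of degree at most $d-1$ with compact support $[a, b]$. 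Treating $g_v$ as a compactly supported distribution and expanding $g_v^{(d)} = \sum_{k=0}^{d-1} \sum_j [g_v^{(k)}]_{s_j}\, \delta^{(d-1-k)}_{s_j}$, we derive the exact finite expansion
$$\widehat{g_v}(t) = \sum_{k=0}^{d-1} \frac{1}{(2\pi i t)^{k+1}} \sum_j [g_v^{(k)}]_{s_j}\, e^{-2\pi i t s_j},$$
where the inner sum ranges over the breakpoints $s_j$ of $g_v$ (including the endpoints $a, b$) and $[g_v^{(k)}]_{s_j}$ denotes the jump of the $k$-th classical derivative at $s_j$.

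The direction $v$ is chosen according to which part of the hypothesis applies. If $A$ possesses a pair of parallel facets $F^+, F^-$ with $\mes_{d-1}(F^+) \neq \mes_{d-1}(F^-)$, take $v$ perpendicular to $F^\pm$: the $k=0$ sum equals $\mes_{d-1}(F^-)\,e^{-2\pi i t a} - \mes_{d-1}(F^+)\,e^{-2\pi i t b}$, whose modulus is at least $|\mes_{d-1}(F^+) - \mes_{d-1}(F^-)| > 0$ for every $t$; hence $|\widehat{g_v}(t)| \gtrsim 1/|t|$ for $|t|$ large, and $\widehat{g_v}$ has only finitely many zeros. If $A$ is centrally symmetric but a facet $F^+$ is not, take $v$ perpendicular to $F^+$: then $F^- = -F^+$, $a = -b$, the $k=0$ sum $V(e^{-2\pi i t a} - e^{-2\pi i t b})$ (with $V = \mes_{d-1}(F^+)$) vanishes exactly at $t(b-a) \in \Z$, and by the central symmetry we have $g_v'(b^-) = -g_v'(a^+)$, so the $k=1$ sum $g_v'(a^+)(e^{-2\pi i t a} + e^{-2\pi i t b})$ is maximally nonzero precisely where the $k=0$ one vanishes. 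Assuming $g_v'(a^+) \neq 0$, this yields $|\widehat{g_v}(t)| \gtrsim 1/|t|^2$ for $|t|$ large. In either case, a generic rescaling $v \mapsto \lambda v$ ensures $\widehat{\1}_A(nv) \neq 0$ for every integer $n$ with $|n| \ge R$, for a suitable $R$.

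The passage from the axis $\R v$ to the full tube $S(v, R, \eps)$ is by continuity: for $\eta \in v^\perp$ with $|\eta| < \eps$, $G(s, \eta)$ is a uniform small perturbation of $g_v(s)$ and the boundary-jump asymptotic applies to $\widehat{G(\cdot,\eta)}(t)$ with coefficients depending continuously on $\eta$, so the quantitative lower bounds persist, giving $\widehat{\1}_A(nv + w) \neq 0$ for all $|n| \ge R$ and $|w| < \eps$; combined with the trivial non-vanishing in a small ball about the origin ($\widehat{\1}_A(0) = |A| > 0$), this furnishes the zero-free set $S(v, R, \eps)$. The main obstacles I anticipate are two degenerate situations. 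First, in the centrally-symmetric-facet case the slope $g_v'(a^+)$ may vanish, meaning $A$ is ``locally prismatic'' near $F^\pm$; one must then proceed to higher-order boundary coefficients and argue that full vanishing of all of them would force $A$ to coincide with a prism over $F^+$ near the top and bottom, making $F^+ = -F^+$ by the central symmetry of $A$ and contradicting the assumed non-symmetry of $F^+$. Second, when $A$ is not centrally symmetric but possesses no pair of parallel facets of unequal measure (for instance when $A$ is a simplex), the facet-perpendicular strategy is unavailable; one instead uses a generic direction $v$ together with the Brion-type vertex-sum formula for $\widehat{\1}_A(tv)$ and a Diophantine argument on the frequencies $\langle v, p \rangle$ to ensure non-vanishing of the resulting trigonometric sum at integer points.
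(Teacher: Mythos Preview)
The paper does not argue this from scratch; it cites \cite{GL17}, Sections~3--4, and singles out Minkowski's theorem as the key input. That theorem already dissolves your obstacle~2: a convex polytope is centrally symmetric if and only if every facet has a parallel facet of equal $(d-1)$-measure, so if $A$ is not centrally symmetric there is always a facet $F$ that either has no parallel partner or has one of different measure. In the first case your $k=0$ jump sum along $v\perp F$ consists of a single nonzero term and never vanishes; the simplex needs no separate Brion-type argument.

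The substantive gap is in the centrally symmetric case. Your ``$k=1$ sum $g_v'(a^+)(e^{-2\pi ita}+e^{-2\pi itb})$'' records only the jumps of $g_v'$ at the endpoints and omits the jumps at the interior breakpoints of $g_v$ (the heights of vertices of $A$ not lying on $F^{\pm}$). With those included, the $k=1$ term is a genuine finite exponential sum in $t$, and there is no evident reason it should stay bounded away from zero on the arithmetic progression where the $k=0$ term vanishes; the asserted lower bound $|\widehat{g_v}(t)|\gtrsim|t|^{-2}$ is therefore unjustified. More fundamentally, on the axis $\R v$ with $v\perp F^+$ you only see $g_v(s)=\mes_{d-1}(A_s)$, which records slice \emph{areas}; the asymmetry of $F^+$ is a statement about the \emph{shape} of the top slice, and your scheme gives no mechanism that converts it into a uniform on-axis lower bound. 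The argument in \cite{GL17} proceeds differently: it applies Minkowski's theorem a second time, to the $(d-1)$-polytope $F^+$, producing a subfacet $E\subset F^+$ with no parallel equal-measure partner inside $F^+$, iterates the divergence theorem down to codimension-two faces, and chooses $v$ using both the facet normal and a direction in the facet determined by $E$. That two-level construction is what yields the lower bound (equations (4.6), (4.15), (4.16) there); a one-dimensional slice perpendicular to $F^+$ does not.
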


\begin{proof}
This result was essentially established
in \cite[Sections 3 and 4]{GL17}. The proof is based on
Minkowski's theorem, which states that a
convex polytope $A$ is centrally symmetric
if and only if each facet $F$ of $A$
has a parallel facet $F'$ of $A$ such that $|F|=|F'|$
(see, for instance, \cite[Corollary 18.1]{Gru07}).

If $A$ is not centrally symmetric, then
the   conclusion of \thmref{thmFA}
follows from standard estimates, see
 equation (3.5) in \cite[Section 3]{GL17},
 where an even larger zero-free region is given,
which is obtained from a cylinder of width $\eps$
 by the removal of a large ball around the origin.

The case where $A$ is centrally symmetric
but has a non-centrally symmetric facet, is more intricate.
In this case, the conclusion of \thmref{thmFA}
is obtained as a combination of equations (4.6), (4.15) and (4.16) in
\cite[Section 4]{GL17}.

It remains to notice that $\ft{\1}_A$ also has no zeros
in some neighborhood of the origin, which is obvious
as $\ft{\1}_A$ is a continuous function with
 $\ft{\1}_A(0) = m(A) > 0$.
\end{proof}

\subsection{Translation-bounded measures}

A (complex) measure $\mu$ on $\R^d$  is said to be \emph{translation-bounded}
 if for every (or equivalently, for some) open ball $B$ we have
\begin{equation}
\sup_{x \in \R^d} |\mu|(B+x) < +\infty.
\end{equation}
If  $\mu$ is a translation-bounded measure on $\R^d$,
then $\mu$ is a tempered distribution.
Hence, the Fourier transform $\ft\mu$
is again a tempered distribution, defined by
$\ft\mu(\varphi)=\mu(\ft{\varphi})$
for any Schwartz function $\varphi$. 

If $f$ is a function in $L^1(\R^d)$ and
$\mu$ is a translation-bounded measure on $\R^d$,
then the convolution $f \ast \mu$ is
a translation-bounded measure which is also
a locally integrable function on $\R^d$,
which can be
defined (uniquely up to equality a.e.)
by the condition that
$(f \ast \mu) \ast \varphi =
f \ast (\mu \ast \varphi)$
for every continuous, compactly supported
function $\varphi$.

If $f \in L^1(\R^d)$ then we denote by
\begin{equation}
Z(f) := \{ \xi \in \R^d : \ft{f}(\xi)=0\}
\end{equation}
the (closed) set of   zeros of
the Fourier transform   $\ft{f}$.

\begin{thm}
\label{thmC1}
Let $f\in L^1(\R^d)$, $\int f \neq 0$,
and let $\mu$ be
a translation-bounded measure on $\R^d$.
If we have $f \ast \mu = 1$ a.e., then
$\ft{\mu} = (\int f)^{-1} \cdot \delta_0$
in the open set
$Z(f)^\complement$.
\end{thm}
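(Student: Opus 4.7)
The plan is to take the Fourier transform of the hypothesis $f\ast\mu = 1$ and ``divide by $\widehat f$'' on the open set $U := Z(f)^\complement$ where it does not vanish.

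First I would recast the goal as a vanishing statement: setting $c := (\int f)^{-1}$ and $\nu := \mu - c\,dx$, the equation $f\ast\mu = 1$ becomes $f\ast\nu = 1 - c\int f = 0$ a.e., and since $\widehat\nu = \widehat\mu - c\delta_0$, the desired conclusion is equivalent to $\widehat\nu = 0$ on $U$.

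To verify this, I would fix an arbitrary $\varphi \in C_c^\infty(U)$ and exploit the pointwise factorization $\varphi = \psi\,\widehat f$, where $\psi := \varphi/\widehat f$ is well-defined (because $\widehat f$ is continuous and nonvanishing on the compact set $\supp\varphi$) and extended by zero outside $\supp\varphi$. The key computation is
\[
\psi(0) \;=\; \int\widehat\psi\,dx \;=\; \int(f\ast\mu)\widehat\psi\,dx \;=\; \iint f(x-y)\widehat\psi(x)\,dx\,d\mu(y) \;=\; \int \widehat{\psi\widehat f}(y)\,d\mu(y) \;=\; \int\widehat\varphi\,d\mu \;=\; \langle\widehat\mu,\varphi\rangle,
\]
using in turn Fourier inversion, the hypothesis $f\ast\mu = 1$, Fubini (justified by the translation-boundedness of $\mu$ and the Schwartz decay of $\widehat\psi$), the Fourier identity $\int f(u)\widehat\psi(u+y)\,du = \widehat{\psi\widehat f}(y)$, and the factorization $\psi\widehat f = \varphi$. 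Since $\widehat f(0) = \int f$, we have $\psi(0) = \varphi(0)/\widehat f(0) = (\int f)^{-1}\varphi(0)$, which gives exactly $\langle\widehat\mu,\varphi\rangle = \langle(\int f)^{-1}\delta_0,\varphi\rangle$, as desired.

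The main obstacle is ensuring that $\psi = \varphi/\widehat f$ is a genuine Schwartz function, so that $\widehat\psi \in L^1$ and the Fourier inversion step $\int\widehat\psi\,dx = \psi(0)$ is applicable. This requires $\widehat f$ to be smooth on a neighborhood of $\supp\varphi$, which is automatic in the applications of this paper, where $f = \1_A$ is compactly supported so that $\widehat f$ is entire and hence $\psi \in C_c^\infty$. In the full generality of $f \in L^1$ with $\widehat f$ only continuous, an approximation argument producing smooth $\psi_n$ with $\psi_n\,\widehat f \to \varphi$ in a mode of convergence compatible with integration against $\mu$ would be needed.
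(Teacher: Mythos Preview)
Your overall strategy---pair a test function $\varphi\in C_c^\infty(U)$ against $\widehat\mu$ by factoring $\varphi=\psi\,\widehat f$ and unwinding via Fubini and the hypothesis $f\ast\mu=1$---is exactly the paper's strategy, and your chain of equalities is correct once $\widehat\psi\in L^1$. You have also correctly located the only obstruction: for general $f\in L^1$ the quotient $\psi=\varphi/\widehat f$ is merely continuous with compact support, so $\widehat\psi$ need not be integrable, and both the inversion step $\psi(0)=\int\widehat\psi$ and the Fubini step can fail. Your remark that this is harmless when $f=\1_A$ (so $\widehat f$ is entire and $\psi\in C_c^\infty$) is right, so your argument does cover the application in the paper.

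The paper's proof handles the full $L^1$ generality not by an approximation of $\psi$, but by replacing pointwise division by $\widehat f$ with \emph{convolution by an $L^1$ local inverse}. This is Wiener's theorem: given a closed ball $K\subset U$ containing $\supp\varphi$, there exists $g\in L^1(\R^d)$ with $\widehat f\,\widehat g=1$ on $K$. Then $\widehat\varphi\cdot\widehat g\cdot\widehat f=\widehat\varphi$, hence (with $\Phi$ the Schwartz function whose Fourier transform is $\varphi$) one has $\Phi\ast g\ast f=\Phi$, and the computation
\[
\widehat\mu(\varphi)=\int\Phi(-t)\,d\mu(t)=\int(\Phi\ast g\ast f)(-t)\,d\mu(t)
=\int(\Phi\ast g)(-x)\,(f\ast\mu)(x)\,dx=\int(\Phi\ast g)(-x)\,dx=\widehat\varphi(0)\,\widehat g(0)
\]
goes through because $\Phi\ast g\in L^1$ and $|f|\ast|g|\in L^1$, with Fubini justified since $\int|\Phi(x-t)|\,d|\mu|(t)$ is bounded in $x$. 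In effect, Wiener's theorem supplies a function whose Fourier transform equals $1/\widehat f$ on $K$ \emph{and} lies in the range of the Fourier transform on $L^1$; this is precisely the ingredient your pointwise quotient lacks, and it replaces the vague ``approximation argument'' you propose with a one-line invocation.
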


\begin{proof}

This result can be proved in a similar way to
\cite[Theorem 4.1]{KL16}. The details are as follows.
First we observe that it
 suffices to prove the following claim:
Let $B$ be an open ball contained in
$Z(f)^\cm$, and let  $\psi$ be a smooth function
whose closed
support is contained in $B$. Then
$\ft\mu( \psi ) = (\int f)^{-1} \psi(0)$.

To prove this, let
 $K \sbt B$ be a closed ball containing
$\supp( \psi)$. Since $\ft f$ does not vanish on $K$, then
due to Wiener's theorem
there exists  $g\in L^1(\R^d)$ such that $\ft{f}\cdot \ft{g}=1$ on $K$
(see \cite[pp.\ 150--152]{Hel10}
for a proof in the one-dimensional case; the
multi-dimensional case is similar).
There is a Schwartz  function $\varphi$
such that $\ft\varphi=\psi$, so this function $\varphi$  satisfies
$\ft{\varphi}\cdot \ft{g} \cdot \ft{f}=\ft{\varphi}$.
This implies that
$\varphi\ast g\ast f=\varphi$, hence
\begin{align}
\label{eqTB.1}
  \ft\mu(\psi)  &= \mu(\ft \psi)
= \int \varphi(-t)d\mu(t)
=\int (\varphi \ast g\ast f )(-t) d\mu(t)\\
&=\int \Big( \int ( \varphi\ast g)(-x) f(x-t)dx \Big) d\mu(t).
\end{align}
If we can exchange the order of integrals, we obtain
\begin{equation}
\label{eqTB.2}
  \ft\mu(\psi) =  \int
  (\varphi\ast g) (-x) \Big( \int f(x-t) d\mu(t) \Big) dx= \int ( \varphi\ast g) (-x) dx,
\end{equation}
because the inner integral is $(f \ast \mu ) (x) $,
which is $1$ a.e.\ by assumption.
Furthermore,
\begin{equation}
\label{eqTB.3}
\int (\varphi\ast g)(-x) dx=\ft\varphi(0)\ft g(0)= \ft f(0)^{-1} \ft\varphi(0)
= ({\textstyle \int f})^{-1} \psi(0),
\end{equation}
where the second equality holds since we have
$\ft{\varphi}\cdot \ft{g} \cdot \ft{f}=\ft{\varphi}$,
and $\ft f(0) = \int f \neq 0$.

It thus remains to justify the exchange of integrals.
Indeed, observe that
\begin{align}
\label{eqTB.5}
&\int \Big( \int ( |\varphi|\ast |g| ) (-x) \cdot |f(x-t)|dx \Big) |d\mu(t)| \\
& \qquad\qquad =\int (|f|\ast |g|)(-x)  \Big( \int |\varphi(x-t)| \, |d\mu(t)| \Big) dx,
\end{align}
and the latter integral is finite,
because $|f|\ast |g|\in L^1(\R^d)$, and the inner integral is a bounded function of $x$ since
$\varphi$ is a Schwartz function and $\mu$ is a translation-bounded measure.
This justifies the exchange of integrals and completes the proof.
\end{proof}

\subsection{}
After these preliminary results we are ready to prove the main theorem.

\begin{proof}[Proof of \thmref{thmB1}]
Let $A$ be a convex polytope in $\R^d$,
and assume that $A$ weakly
 tiles its complement, so there is a positive
measure $\nu$ such that
$\1_A \ast \nu = \1_{A^\complement}$ a.e.
According to \cite[Lemma 2.4]{LM22} the measure $\nu$
must be  translation-bounded. Hence the
measure $\mu := \delta_0 + \nu$ is also translation-bounded,
and satisfies $\1_A \ast \mu = 1$ a.e.
In turn, \thmref{thmC1}  implies that
$\ft{\mu} = m(A)^{-1} \cdot \delta_0$
in the open set $Z(\1_A)^\cm$.

We must prove that
$A$ is centrally symmetric and has
centrally symmetric facets.
Suppose to the contrary that this is not the case.
Then by \thmref{thmFA}
there exist
a nonzero vector $v \in \R^d$, a positive integer $R$
and $\eps > 0$, such that $\ft{\1}_A$
has no zeros in the set $S =  S(v, R, \eps)$.
It follows  that
$\ft{\mu} = m(A)^{-1} \cdot \delta_0$
in the open set $S$.

Now suppose that we are given a real-valued
Schwartz function
$g$ on $\R^d$ satisfying
\begin{equation}
\label{eqGC1}
    \supp(g) \sbt S,
    \quad
    \ft{g} \geq 0.
\end{equation}
Then we have
\begin{equation}
\label{eqGineq}
    \int_{\R^d} g(x) dx = \ft{g}(0) \leq
    \ft{g}(0) + \int \ft{g}(\xi) d \nu(\xi) =
     \int \ft{g}(\xi) d \mu(\xi),
\end{equation}
where the inequality in \eqref{eqGineq} is due to $\ft{g}$
being a nonnegative function and $\nu$
being a positive measure. On the  other hand,
we have
\begin{equation}
     \int \ft{g}(\xi) d \mu(\xi)= \mu(\ft{g}) =
     \ft{\mu}(g) = m(A)^{-1} g(0),
\end{equation}
where the last equality holds since we have
$\supp(g) \sbt S$ and
$\ft{\mu} = m(A)^{-1} \cdot \delta_0$
in the open set $S$. We conclude that
\begin{equation}
\label{eqGC2}
\int_{\R^d} g(x) dx \leq m(A)^{-1} g(0)
\end{equation}
for every real-valued Schwartz function $g$
satisfying \eqref{eqGC1}.
We will show that this leads to a contradiction,
by constructing an example of a
real-valued Schwartz function $g$ satisfying
 \eqref{eqGC1}, but such that
 \eqref{eqGC2} does not hold.

We choose a nonnegative Schwartz function $\varphi$
such that $\int \varphi =1$,
$\varphi$ is supported in
 the open ball of radius
$\eps$ centered at the origin,
and $\ft{\varphi} \geq 0$. We
 define also the trigonometric polynomial
$p_N(t) := K_N(R \, t)$,
where $K_N$ is the classical Fej\'{e}r kernel,
\begin{equation}
\label{eqFej}
K_N(t) = \sum_{|n| < N} \Big(1 - \frac{|n|}{N} \Big)
e^{2 \pi i n t}, \quad t \in \R.
\end{equation}
Then $p_N$ is nonnegative, $p_N(0) = N$,
the Fourier coefficients $\ft{p}_N(n)$,
$n \in \Z$, are also nonnegative,
$\ft{p}_N(0)=1$, and we
have  $\ft{p}_N(n)=0$ if $0 < |n| < R$.

Finally, we define the function
\begin{equation}
\label{eqGN1}
g_N(x) := \sum_{n \in \Z}
\ft{p}_N(n) \, \varphi(x - n v),
\quad x \in \R^d.
\end{equation}
Notice that there are only finitely many
nonzero terms in the sum \eqref{eqGN1}, and that the nonzero
terms correspond to integers $n$ such that either $n=0$
or $|n| \ge R$.
Hence $g_N$ is a real-valued (in fact, nonnegative)
Schwartz function such that
$\supp(g_N)$ is contained in the set
$S =  S(v, R, \eps)$. The Fourier transform
of $g_N$ is given by
\begin{equation}
\label{eqGN2}
\ft{g}_N(\xi) = \ft{\varphi}(\xi) \sum_{n \in \Z}
\ft{p}_N(n) \, e^{-2 \pi i n \dotprod{v}{\xi}}
= \ft{\varphi}(\xi) \, p_N(- \dotprod{v}{\xi}),
\end{equation}
hence $\ft{g}_N$ is a nonnegative function. We
conclude that $g_N$ satisfies the conditions
\eqref{eqGC1}.

To complete the proof
we will show that if $N$
is sufficiently large, then $g_N$ does not
satisfy \eqref{eqGC2}. Indeed, we may assume
that $\eps < \frac1{2} |v|$ which ensures
that the terms in the sum
\eqref{eqGN1} have pairwise disjoint supports.
This implies that
\begin{equation}
\label{eqGN4}
g_N(0) = \varphi(0),
\end{equation}
so that the value $g_N(0)$ does not depend on $N$.
On the other hand, using \eqref{eqGN2} we have
\begin{equation}
\label{eqGC5}
\int_{\R^d} g_N(x) dx = \ft{g}_N(0)
= \ft{\varphi}(0) p_N(0) = N,
\end{equation}
which can be arbitrarily large,
contradicting \eqref{eqGC2}. We therefore
arrive at the desired contradiction, and
\thmref{thmB1} is thus proved.
\end{proof}


\section{Weak tiling by finite unions of intervals}
\label{sec3}

In dimensions $d=1$ and $2$, Fuglede's spectral set conjecture is still open in both directions. The important special case where $\Om \sbt \R$ is a finite union of intervals was recently studied in \cite{DDF25}, where several necessary conditions were established for $\Om$ to be a spectral set. One of these conditions states that the lengths of the gaps between the intervals must be representable as sums of lengths (with multiplicities) of some of the intervals composing $\Om$.
In this section, in \thmref{thmG2} below, we will prove this in more generality, assuming only that $\Om$ weakly tiles its complement.

We remark here that Fuglede's conjecture was proved for a union of \emph{two} intervals, see
\cite{Lab01}, but it is still open for a union of any higher number of intervals.

\subsection{Spectral unions of intervals}
Let us begin by considering the more restrictive situation
where $\Om$ does not only weakly tile its complement,
but moreover is assumed to be spectral.
In this case, the following result was obtained in
\cite{DDF25}.

\begin{thm}[{\cite[Theorem 3.1(i)]{DDF25}}]
\label{thmD5.4}
Let  $\Om = \bigcup_{i=1}^{n} (a_i, b_i)$,
$a_1 < b_1 < a_2 < b_2 < \dots < a_n < b_n$,
and assume that $\Om$ is a spectral set.
 Then each gap length $a_{k+1} - b_k$
is representable in the form
$ \sum_{i=1}^{n} p_i (b_i - a_i)$
where $p_i$ are nonnegative integers.
\end{thm}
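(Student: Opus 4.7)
The plan is to deduce this theorem from a more general statement about weak tilings, since by \cite[Theorem~1.5]{LM22} a spectral set must weakly tile its complement. So there exists a positive, translation-bounded measure $\nu$ on $\R$ such that $\1_\Om\ast\nu=\1_{\Om^\cm}$ a.e.; writing $\mu=\del_0+\nu$ we have $\1_\Om\ast\mu=1$ a.e. Denote by $G_k=(b_k,a_{k+1})$ the $k$-th gap, $g_k=a_{k+1}-b_k$ its length, and $\ell_i=b_i-a_i$ the interval lengths. The core of the argument is a ``chain of atoms'' of $\mu$ inside $G_k$.

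First I would take the distributional derivative of $\1_\Om\ast\mu=1$. Since $\1_\Om'=\sum_i(\del_{a_i}-\del_{b_i})$, this yields the equality of positive measures $\sum_i T_{a_i}\mu=\sum_i T_{b_i}\mu$, where $T_a$ denotes translation by $a$; restricting to atomic parts gives the scalar identity $\sum_{i=1}^n\mu(\{x-a_i\})=\sum_{i=1}^n\mu(\{x-b_i\})$ for every $x\in\R$. Second, I would establish the \emph{interval-inside-the-gap} principle: if $\mu$ has an atom of positive weight at some $t$, then for each $i$ the translated interval $(a_i+t,b_i+t)$ lies entirely inside a single connected component of $\Om^\cm$. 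This is immediate from positivity, since an overlap of $(a_i+t,b_i+t)$ with an interval of $\Om$ on a set of positive measure would force $\1_\Om\ast\mu>1$ there, contradicting the tiling identity.

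Third, I would run the chain. Evaluating the atomic identity at $x=b_k$, the right-hand side is at least $1$ (from $\mu(\{0\})\geq1$, as $\del_0\leq\mu$), forcing an atom of $\mu$ at a point $t_1=b_k-a_{i_1}$; by the interval-inside principle, $\ell_{i_1}\leq g_k$. If $\ell_{i_1}=g_k$ we are done; otherwise evaluating the identity at $x=b_k+\ell_{i_1}$, the right-hand side now contains the term $\mu(\{b_k+\ell_{i_1}-b_{i_1}\})=\mu(\{t_1\})>0$ (via the index $i=i_1$), forcing a new atom at $t_2=b_k+\ell_{i_1}-a_{i_2}$ with $\ell_{i_2}\leq g_k-\ell_{i_1}$. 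Iterating, the partial sums $\ell_{i_1}+\cdots+\ell_{i_j}$ form a strictly increasing sequence bounded above by $g_k$, and by the interval-inside principle they must terminate at $g_k$ exactly. This yields $g_k=\sum_j\ell_{i_j}=\sum_{i=1}^np_i\ell_i$ with $p_i\in\Z_{\geq0}$.

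The main technical subtlety I expect is isolating the atomic identity cleanly: one must verify that the Lebesgue decomposition of $\mu$ is preserved under the sums and translations appearing in the derivative identity so that atomic parts can be matched pointwise on both sides, and that the interval-inside principle holds uniformly across all atoms of $\mu$. Both should be routine consequences of positivity and of the fact that the derivative identity holds globally on $\R$, but they deserve careful checking in the final write-up.
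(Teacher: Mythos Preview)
Your approach is correct and takes a genuinely different route from the paper's. For \thmref{thmD5.4} itself, the paper first invokes the known (and nontrivial) fact that any spectrum of a finite union of intervals is periodic; this forces the weak tiling measure $\nu$ to be supported on an arithmetic progression, so on each gap $I=(a,b)$ one obtains a \emph{finite} positive combination $\1_I=\sum_{j=1}^m w_j\1_{(x_j,y_j)}$, which is then differentiated and chased through a finite chain $a=x_{j_1}<y_{j_1}=x_{j_2}<\cdots=b$. You bypass periodicity entirely: you differentiate $\1_\Om\ast\mu=1$ globally, pass to atomic parts to get the scalar identity $\sum_i\mu(\{x-a_i\})=\sum_i\mu(\{x-b_i\})$ for all $x$, and run the chain directly on atoms of $\mu$, with termination guaranteed by $\ell_{i_j}\ge\min_i\ell_i>0$. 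This is both more elementary (no periodicity-of-spectrum input) and strictly more general: since you use only the weak tiling hypothesis, your argument in fact proves \thmref{thmG2} directly, whereas the paper reaches that stronger statement only later, via a separate and longer route that first shows $\nu$ is pure point (\thmref{thmG1}) and then appeals to an intricate induction on weak tilings of a bounded interval (\thmref{thmG3}). The one point worth writing out carefully in the final version is that at each step the candidate atom $t_j=b_k+S_{j-1}-a_{i_j}$ is never zero, because $b_k+S_{j-1}\in[b_k,a_{k+1})$ and this half-open interval contains no left endpoint $a_i$; hence the interval-inside principle always applies and the chain cannot stall before hitting $g_k$.
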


Below we give an alternative proof of this result,
based on a simple argument combined
with existing results in the literature.

\begin{proof}[Proof of \thmref{thmD5.4}]
Since $\Om$ is a spectral set,
it admits a spectrum $\Lam$, that is,
there is a countable set of frequencies
$\Lam \sbt \R$
such that the system of exponential functions
$\{\exp 2 \pi i {\lam}{x} \}$, $\lam \in \Lam$,
forms an orthogonal basis in $L^2(\Om)$.
It is known that for a finite
union of intervals,
any spectrum $\Lam$ is periodic
\cite{BM11}, \cite{Kol12}, \cite{IK13}.
In this case,
the proof of \cite[Theorem 1.5]{LM22}
yields that  $\Om$ admits a weak tiling
measure $\nu$ with the additional property that
$\ft \nu$ is a periodic measure. This implies
that $\nu$ is a pure point measure which is
supported on some arithmetic progression.
In particular, $\supp(\nu)$ is a locally
finite set.

The weak tiling condition thus says that
$\1_{\Om^\cm} = \sum_t \nu(t) \1_{\Om + t}$ a.e., where
$t$ goes through the atoms of $\nu$.
In particular, if $t$ is an atom of $\nu$, then
$\Om + t \sbt \Om^\cm$.
If we now fix one of the gaps  $I = (a, b)$, then
$\1_{I} = \sum_t \nu(t) \1_{I \cap (\Om + t)}$ a.e.
We observe that in this sum  it  suffices that $t$ only runs
through a finite set of atoms,
since  $\supp(\nu)$ is a locally finite set and hence
$\Om + t $ does not intersect $I$ for all but finitely many
atoms $t$.

Note that for any atom $t$, the set $ I \cap (\Om + t) $
is a union of several of the intervals
composing $\Om$. Hence the
indicator function $\1_I$ is a
 finite linear combination with positive   coefficients
 of indicator functions of intervals $I_j$,
where the length of each $I_j$
belongs to the
finite set $L = \{b_1-a_1, \dots, b_n-a_n\}$.
Hence, if we denote $I_j = (x_j, y_j)$, then we have
$\1_{(a,b)} = \sum_{j=1}^{m} w_j \1_{(x_j, y_j)}$
a.e., where $w_j$ are strictly positive scalars.

This equality holds also in the sense of distributions,
hence differentiating yields
\begin{equation}
\label{eq:C2.1}
\del_{a} - \del_{b}
 = \sum_{j=1}^{m} w_j (\del_{x_j} - \del_{y_j}).
\end{equation}
The measure on the right hand side of  \eqref{eq:C2.1}
thus must assign zero mass to all the points $x_j$ and $y_j$
except for $a$ and $b$, while it assigns the mass $+1$ to $a$
and $-1$ to $b$.

 It follows from  \eqref{eq:C2.1} that at least one of the
 points $x_j$ must be equal to $a$; let's say it is the
 point $x_{j_1}$.
 Then we have $a = x_{j_1} < y_{j_1}$.
The $j_1$'th term on the right hand side of  \eqref{eq:C2.1}
gives a negative mass to $y_{j_1}$, hence if $y_{j_1} < b$
then there must exist $j_2$ such that $x_{j_2} = y_{j_1}$.
So we have $a = x_{j_1} < y_{j_1} =  x_{j_2} < y_{j_2}$.

Continuing this way, we obtain a sequence
\begin{equation}
\label{eq:C2.2}
a = x_{j_1} < y_{j_1} =  x_{j_2} < y_{j_2} = x_{j_3} < y_{j_3} = \dots
\end{equation}
which we can continue as long as $y_{j_s} < b$. Since there
are finitely many points $x_j, y_j$, it follows that
there must exist $N$ such that $y_{j_N} = b$. This implies that
\begin{equation}
\label{eq:C2.3}
b - a = \sum_{s=1}^{N} (y_{j_s} - x_{j_s}),
\end{equation}
so the gap length $b-a$ is a
sum of lengths of intervals
$I_{j_s} = (x_{j_s}, y_{j_s})$
and these
lengths belongs to the
finite set $L = \{b_1-a_1, \dots, b_n-a_n\}$.
This implies that the gap length
$b-a$  is  representable
in the form
$ \sum_{i=1}^{n} p_i (b_i - a_i)$
where $p_i$ are nonnegative integers.
\end{proof}

\subsection{Weak tiling measures are pure point}
We now  turn to the more general case
where $\Om \sbt \R$ is a finite union of intervals
that weakly tiles its complement, but is
not assumed to be spectral.
This case is more intricate, since the weak tiling measure
$\nu$
is not known a priori to be supported on a locally finite set,
nor even to be purely atomic.

We begin by showing
that any weak tiling measure must
be purely atomic.

\begin{thm}
\label{thmG1}
Let $\Om \sbt \R$ be a finite union of intervals,
and assume that $\nu$ is a weak tiling measure for $\Om$.
Then $\nu$ must be a pure point measure.
\end{thm}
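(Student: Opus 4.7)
The plan is to pass to the Fourier side. Setting $\mu := \delta_0 + \nu$, we obtain a translation-bounded positive measure on $\R$ satisfying $\1_\Om \ast \mu = 1$ a.e., so by \thmref{thmC1}, $\ft\mu$ coincides with $m(\Om)^{-1}\delta_0$ on the open set $Z(\1_\Om)^\cm$. Since $\Om=\bigcup_{i=1}^n(a_i,b_i)$ is a finite union of intervals, the quantity $2\pi i\xi\,\ft{\1}_\Om(\xi)=\sum_i(e^{-2\pi ia_i\xi}-e^{-2\pi ib_i\xi})$ is a nontrivial exponential polynomial in $\xi$, so its real zero set $Z(\1_\Om)$ is a discrete subset of $\R$. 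Thus $\ft\mu$ is a tempered distribution supported on the discrete closed set $\{0\}\cup Z(\1_\Om)$; by the structure theorem, it is locally a finite linear combination of $\delta_{\xi_0}$ and its derivatives near each support point $\xi_0$.

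Next, I would use the translation-boundedness of $\mu$ to exclude the derivative terms. The idea is that pairing $\ft\mu$ against a test function $\psi(\xi)e^{2\pi iT\xi}$, where $\psi\in C_c^\infty$ is a fixed bump supported near $\xi_0$, equals $\int\ft\psi(x-T)\,d\mu(x)$; the latter remains bounded as $T\to\infty$ since $\mu$ is translation-bounded and $\ft\psi$ is Schwartz. On the other hand, a $\delta_{\xi_0}^{(k)}$-contribution with $k\ge 1$ would produce growth of order $T^k$ in the pairing, a contradiction. Hence $\ft\mu=\sum_\xi c_\xi\delta_\xi$ is a pure point measure on $\{0\}\cup Z(\1_\Om)$ with uniformly bounded coefficients.

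The main obstacle is the final step: to deduce from pure-pointness of $\ft\mu$ that $\nu$ itself is pure point --- this is not automatic, since pure-point Fourier transform alone does not imply pure-point physical measure (for instance $\ft{\mathrm{Leb}}=\delta_0$). My plan is to combine the Fourier result with a semi-continuity argument: $\1_\Om\ast\nu$ is lower semi-continuous (since $\Om$ is open) while $\1_{\Om^\cm}$ is upper semi-continuous, and their a.e.\ equality forces $\1_\Om\ast\nu\le\1_{\Om^\cm}$ pointwise, yielding $\nu(x-\Om)=0$ for every $x\in\Om$. Covering $\Om-\Om$ by countably many translates $x_k-\Om$ with $\{x_k\}$ dense in $\Om$ and applying $\sigma$-subadditivity then gives $\supp(\nu)\sbt(\Om-\Om)^\cm$. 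In the Lebesgue decomposition $\nu=\nu_d+\nu_c$, the continuous part $\nu_c$ must therefore be supported in the unbounded components of $(\Om-\Om)^\cm$. The weak tiling identity $\1_\Om\ast\nu=1$ on the unbounded gaps of $\Om^\cm$ then translates, for $\nu_c$, into integral equations whose only nonnegative solution (given the support constraint and the rigidity imposed by the pure point $\ft\mu$) is $\nu_c=0$, completing the proof.
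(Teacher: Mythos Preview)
Your Fourier-side computation through step 5 is correct and essentially dual to what the paper does: from $\1_\Om\ast\mu=1$ one gets that $\ft\mu$ is supported on the discrete zero set of the exponential polynomial $\sum_j(e^{-2\pi ia_j\xi}-e^{-2\pi ib_j\xi})$, and the translation-boundedness argument ruling out derivatives of deltas is fine. The semi-continuity step giving $\nu(\Om-\Om)=0$ is also correct.

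The genuine gap is exactly where you flag it, and your proposed resolution does not close it. First, the assertion that $\nu_c$ is supported only in the \emph{unbounded} components of $(\Om-\Om)^\cm$ is unjustified: nothing you have shown distinguishes $\nu_c$ from $\nu_d$ on the bounded gaps of $\Om-\Om$, and a continuous measure can perfectly well live on a compact interval. Second, the appeal to ``integral equations whose only nonnegative solution \dots\ is $\nu_c=0$'' is not an argument; you have not written down these equations nor explained what ``rigidity imposed by the pure point $\ft\mu$'' means operationally. As you yourself note, pure-pointness of $\ft\mu$ does not by itself force $\mu$ to be pure point, and the extra ingredients you list do not obviously supply the missing implication.

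The paper avoids this difficulty by never passing to the Fourier side. It differentiates the tiling identity in physical space to obtain $\rho\ast\mu=0$, where $\rho=\sum_j(\delta_{a_j}-\delta_{b_j})$ is a finitely supported measure. The key observation --- which has no clean Fourier-side analogue --- is that convolution with such a discrete $\rho$ sends pure point measures to pure point measures and continuous measures to continuous measures, so the decomposition $\mu=\mu_d+\mu_c$ yields $\rho\ast\mu_d=0$ and $\rho\ast\mu_c=0$ separately. Integrating $\rho\ast\mu_d=0$ gives $\1_\Om\ast\mu_d=\text{const}$ a.e.; the sandwich $\1_\Om\le\1_\Om\ast\mu_d\le\1_\Om\ast\mu=1$ forces the constant to be $1$, whence $\1_\Om\ast\mu_c=0$ a.e.\ and so $\mu_c=0$. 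This type-separation under convolution with a discrete measure is the idea your argument is missing.
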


\begin{proof}
We may assume that $\Om$ is a nonempty open set given by
$\Om = \bigcup_{j=1}^{n} (a_j, b_j)$,
where
$a_1 < b_1 < a_2 < b_2 < \dots < a_n < b_n$.
By our assumption, the measure $\mu  = \del_0 + \nu$
satisfies $\1_\Om \ast \mu = 1$ a.e.
It is straightforward to verify that if  $\varphi$
is a smooth, compactly supported  function on $\R$, then
$\varphi \ast  (\del_{a_j} - \del_{b_j})  =  \varphi' \ast  \1_{(a_j, b_j)}$
for each $j$. As a consequence,
\begin{equation}
\label{eq:C3.4.4}
\varphi \ast  \sum_{j=1}^{n} (\del_{a_j} - \del_{b_j}) \ast \mu
 =  \varphi' \ast  \1_{\Om} \ast \mu =
  \varphi' \ast  1 = 0.
\end{equation}
As this holds for every smooth, compactly
supported  function $\varphi$, we conclude that
\begin{equation}
\label{eq:C3.5.1}
\sum_{j=1}^{n} (\del_{a_j} - \del_{b_j}) \ast \mu = 0.
\end{equation}

The measure $\mu$ has a  unique decomposition
$\mu = \mu_d + \mu_c$ into a sum of
a pure point measure
$\mu_{d}$  and a continuous measure
 $\mu_c$. We observe that
 $\sum_{j=1}^{n} (\del_{a_j} - \del_{b_j}) \ast \mu_d$
is a pure point measure, while
 $\sum_{j=1}^{n} (\del_{a_j} - \del_{b_j}) \ast \mu_c$
 is a continuous measure. The sum of these two measures
vanishes by \eqref{eq:C3.5.1}, hence
 both measures must be zero. In particular,
\begin{equation}
\label{eq:C3.5.3}
\sum_{j=1}^{n} (\del_{a_j} - \del_{b_j}) \ast \mu_d = 0.
\end{equation}

Now, if $\varphi$ is again
 a smooth, compactly supported  function on $\R$, then as before
\begin{equation}
\label{eq:C3.5.5}
  \varphi' \ast  \1_{\Om} \ast \mu_d = \varphi \ast
\sum_{j=1}^{n} (\del_{a_j} - \del_{b_j}) \ast \mu_d = 0.
\end{equation}
If $\varphi_1$, $\varphi_2$ are two smooth,
 compactly supported  functions,
  $\int \varphi_1 = \int \varphi_2$,
 then there is a
  smooth, compactly supported  function
 $  \varphi$ such that $ \varphi' = \varphi_2 - \varphi_1  $.
 Hence  \eqref{eq:C3.5.5} yields that
$ \varphi_1 \ast  (\1_{\Om} \ast \mu_d ) = \varphi_2 \ast  (\1_{\Om} \ast \mu_d) $.
This is possible only if  the function
$\1_\Om \ast \mu_d$ is constant a.e.
But now recall that $\mu = \del_0 + \nu$, and therefore
\begin{equation}
\label{eq:C3.5.6}
\1_{\Om} = \1_{\Om} \ast \del_0  \le
\1_{\Om} \ast \mu_d  \le \1_{\Om} \ast \mu = 1 \quad \text{a.e.}
\end{equation}
As the function
$\1_\Om \ast \mu_d$ is  constant a.e., it follows that
its constant value must be $1$. In turn, we conclude that
$\1_{\Om} \ast \mu_d  = \1_{\Om} \ast \mu$
a.e., which implies that $\mu = \mu_d$. This shows that
$\mu$, and hence also $\nu$,
must be  a pure point measure.
\end{proof}

\subsection{Weak tiling of an interval by intervals}

We have thus shown that any
 weak tiling measure $\nu$ must be
purely atomic. However, note that
a pure point measure
may have finite accumulation points of atoms.
Hence, we are not guaranteed that $\supp(\nu)$
is a locally finite set, and so we cannot proceed
as in the proof of \thmref{thmD5.4}.

To address this additional difficulty we now
prove two results, which may also be of independent interest,  about
weak tiling of finite intervals and half-lines
by infinitely many intervals.

\begin{thm}
\label{thmG3}
Let $I \sbt \R$ be a bounded open interval,
and suppose that
\begin{equation}
\label{eq:C13.1}
\1_I  = \sum_{j} w_j \1_{I_j} \quad \text{a.e.}
\end{equation}
(a finite or infinite sum) where
$I_j \sbt \R$ are nonempty open intervals,
and $w_j > 0$.
Assume that the lengths of the intervals
$I_j$ belong to some finite set
of positive real numbers $L = \{l_1, \dots, l_n\}$. Then,
\begin{enumerate-roman}
\item \label{it:sfi}
there exist nonnegative integers $s_1, \dots, s_n$
such that $|I| = \sum_{i=1}^{n} s_i l_i$;
\item \label{it:sfiii}
if   $I = (a,b)$, then each one of
the endpoints of any of the intervals $I_j$
is representable in the form
$a + \sum_{i=1}^{n} p_i l_i$
where $p_i$ are some nonnegative integers (depending on $I_j$);
\item \label{it:sfii}
there are only finitely many distinct intervals $I_j$
(so that the sum \eqref{eq:C13.1} is essentially
a finite sum).
\end{enumerate-roman}
\end{thm}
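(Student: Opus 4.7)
The plan is to pass from the $L^1$ identity $\1_I = \sum_j w_j \1_{I_j}$ to an identity of finite signed measures obtained by distributional differentiation, and then exploit the resulting ``flow balance'' at each endpoint. Writing $I = (a,b)$ and $I_j = (x_j, y_j)$, I would first observe that no $I_j$ can poke outside $[a,b]$: since all $w_j$ are positive, no cancellation is possible, so any portion of some $I_j$ lying outside $I$ would force $\sum_j w_j \1_{I_j}$ to be strictly positive where $\1_I$ vanishes. Hence $a \le x_j < y_j \le b$ for all $j$. Integrating the identity gives $\sum_j w_j l_{i_j} = b - a$ where $l_{i_j} = |I_j| \in L$, and since $\min_i l_i > 0$ we obtain the crucial bound $\sum_j w_j < \infty$, so the point-mass sums $\sum_j w_j \del_{x_j}$ and $\sum_j w_j \del_{y_j}$ are \emph{finite} measures.

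Pairing with a smooth compactly supported test function $\varphi$ via $\varphi(x_j) - \varphi(y_j) = -\int \varphi' \, \1_{I_j}$, dominated convergence then yields the measure identity
\begin{equation*}
\sum_j w_j (\del_{x_j} - \del_{y_j}) = \del_a - \del_b.
\end{equation*}
Comparing atomic masses at each $t \in \R$ extracts the flow balance: $\sum_{j: x_j = a} w_j = 1$ (since $y_j > a$ always), $\sum_{j: y_j = b} w_j = 1$, and $\sum_{j: x_j = t} w_j = \sum_{j: y_j = t} w_j$ for every $t \in (a,b)$. An immediate consequence is that any endpoint $t > a$ of some $I_j$ must also be a right endpoint $y_{j_0}$ of some interval in the family; otherwise the right-hand sum at $t$ would vanish while the left-hand sum is positive.

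Part (i) then follows by a forward path argument analogous to that in the proof of \thmref{thmD5.4}: starting from $j_1$ with $x_{j_1} = a$, if $y_{j_1} < b$ the flow balance at $y_{j_1}$ supplies some $j_2$ with $x_{j_2} = y_{j_1}$, and iterating produces a strictly increasing chain $a = x_{j_1} < y_{j_1} = x_{j_2} < \cdots$ in $[a,b]$ whose steps are at least $\min_i l_i > 0$. The chain must therefore terminate after finitely many steps, and flow balance forces termination to occur only at $b$, giving $b - a = \sum_{s=1}^N l_{i_{j_s}}$. Part (ii) is the dual backward argument: given any endpoint $t > a$, the consequence above supplies $j_0$ with $y_{j_0} = t$, so we may step back to $t_1 := x_{j_0} = t - l_{i_{j_0}}$; iterating yields a strictly decreasing sequence in $[a,b]$ which must terminate at $a$, representing $t - a$ as a nonnegative integer combination of the $l_i$'s. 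Part (iii) is then immediate: every endpoint of every $I_j$ lies in the finite set $\{a + \sum_i p_i l_i : p_i \in \Z_{\ge 0}, \; \sum_i p_i l_i \le b - a\}$, so there are only finitely many candidate intervals $I_j$.

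The main delicate point is the passage from the $L^1$ identity to the measure identity when the family $\{I_j\}$ is a priori infinite with possibly dense endpoints. Once the finiteness $\sum_j w_j < \infty$ is secured from the length bound, the measure identity and the pointwise flow-balance reading are routine, after which the combinatorial chain arguments proceed exactly as in the spectral case.
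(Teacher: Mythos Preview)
Your proof is correct, and it takes a genuinely different route from the paper's. The paper does \emph{not} pass to the derivative measure identity; instead it runs a direct inductive sweep, constructing at each stage a finite subfamily $A_k$ and a point $x_k \in \Theta$ such that $\sum_{j \in A_k} w_j \1_{I_j} = 1$ a.e.\ on $[0,x_k]$ and $<1$ a.e.\ on $[x_k,1]$. The inductive step requires a somewhat delicate local analysis (the sets $B_k$, $C_k$, $D_k(\eps)$) to show that the intervals with left endpoint exactly $x_k$ carry total weight $1-\lambda_k$, and finiteness of $\Theta \cap [0,1]$ forces the process to terminate with $x_N = 1$. Your approach is shorter and more conceptual: once $\sum_j w_j < \infty$ is secured, the measure identity $\sum_j w_j(\delta_{x_j}-\delta_{y_j}) = \delta_a - \delta_b$ gives the Kirchhoff-type flow balance at every point for free, and the forward/backward chain arguments are then identical to the finite case in \thmref{thmD5.4}. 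What the paper's approach buys is that it is entirely elementary (no distributions or signed measures), and, more importantly, the inductive construction is reused verbatim in the proof of the half-line version \thmref{thmG6}; your argument would need a small adaptation there since $\sum_j w_j$ is no longer globally finite, though local finiteness on each $(a,x)$ suffices.
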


\begin{proof}
First, note that
$I_j \sbt I$ for all $j$. If we set $l := \min \{l_1, \dots, l_n\}$,
 then
\begin{equation}
\label{eq:C11.1}
|I| = \int \1_I =
 \sum_{j} w_j \int \1_{I_j} \ge
 l \sum_{j} w_j,
\end{equation}
which implies (since $l>0$) that $\sum_j w_j < + \infty$.

Second, if some of the intervals $I_j$ coincide, then we may
replace them with a single interval whose weight is the sum
of the weights of the coinciding intervals.
This allows us to assume, with no loss of generality,
that the intervals $I_j$ are distinct.

We may also suppose with no loss of generality  that $I = (0,1)$.
 Let $\Theta$ be the finite subset of $[0,1]$ consisting
of all the real numbers in $[0,1]$
which are representable in the form
$\sum_{i=1}^{n} p_i l_i$
where $p_i$ are nonnegative integers.

We  shall now construct by induction finite sets
$A_k$ and points $x_k \in \Theta$, such that
the following conditions are satisfied:
\begin{enumerate-alph}
\item \label{it:pki}
The intervals $\{ I_j \}$, $j \in A_k$, have their endpoints in $\Theta$;
\item \label{it:pkii}
$\sum_{j \in A_k} w_j \1_{I_j} = 1$ a.e.\ in $[0, x_k]$;
\item \label{it:pkiii}
$\sum_{j \in A_k} w_j \1_{I_j} < 1$ a.e.\ in $[x_k, 1]$.
\end{enumerate-alph}

We begin by taking
$A_0$ to be the empty set and   $x_0 = 0$, so that
\ref{it:pki}, \ref{it:pkii}, \ref{it:pkiii}
hold for $k=0$.
Now suppose that  we have already  constructed
the finite sets
 $A_0, \dots, A_{k}$ and the points
$x_0, \dots, x_{k} \in \Theta$, such that
\begin{equation}
\label{eq:C11.4}
0 = x_0 < x_1 < \dots < x_{k} < 1.
\end{equation}
We will construct a finite set
$A_{k+1}$, and a point $x_{k+1} \in \Theta$
with
$x_{k} < x_{k+1} \le 1$, such that
the conditions
\ref{it:pki}, \ref{it:pkii}, \ref{it:pkiii}
are satisfied with $k$ replaced by $k+1$.

We observe that
$\sum_{j \in A_k} w_j \1_{I_j}$ is
a piecewise constant function, with  only
a finite number of discontinuity points.
It thus follows from \ref{it:pkiii}
that there is $0 \le \lam_k < 1$ such that
$\sum_{j \in A_k} w_j \1_{I_j} = \lam_k$ a.e.\ in some
interval $[x_k, y_k]$, where $x_k < y_k \le 1$.
This and \ref{it:pkii} imply that the sum of the remainder terms
$\sum_{j \notin A_k} w_j \1_{I_j}$
vanishes a.e.\ in $[0, x_k]$, and is equal to
$1 - \lam_k$ a.e.\ in $[x_k, y_k]$.

By taking $y_k$ smaller if needed, we may
 assume that $x_k < y_k \le \min\{1,  x_k + l\}$.

Let $C_k$ be the set of all $j$
such that the left endpoint of $I_j$ is $x_k$.
Note that $C_k$ is a finite set with at
most $n$ elements, since the intervals $I_j$
are distinct and their lengths belong to
$L = \{l_1, \dots, l_n\}$. We claim that if
we set $B_k := C_k \setminus A_k$ then
$\sum_{j \in B_k} w_j = 1 - \lam_k$.

First, it is obvious that
$\sum_{j \in B_k} w_j$ cannot exceed $1 - \lam_k$.
Indeed, otherwise the sum
$\sum_{j \in B_k} w_j \1_{I_j}$ would
exceed $1-\lam_k$
in the interval $(x_k, x_k + l)$,
which in turn implies that the sum
$\sum_{j \in A_k \cup B_k} w_j \1_{I_j}$ must
exceed $\lam_k + (1 - \lam_k) = 1$ a.e.\ in the
 interval $(x_k, y_k)$.
But this is not possible since
$\sum_{j} w_j \1_{I_j} = \1_I$ a.e.

 Hence, to prove that
$\sum_{j \in B_k} w_j$ is equal to $1 - \lam_k$,
it remains to show that this sum
cannot be less than $1-\lam_k$.
Suppose to the contrary that
$\sum_{j \in B_k} w_j  = 1 - \lam_k - \eta$
for some $\eta > 0$.
Let $D_k(\eps)$ be the set of all $j$
such that the left endpoint of $I_j$ belongs to
$(x_k, x_k + \eps)$.
We choose and fix $\eps > 0$ small enough so that
$\sum_{j \in D_k(\eps)} w_j < \eta$.

Now recall that the sum
$\sum_{j \notin A_k} w_j \1_{I_j}$
vanishes a.e.\ in $[0, x_k]$.
Hence, if for some $j \notin A_k$
the interval $I_j$ intersects $[x_k, x_k + \eps)$,
then the left endpoint of $I_j$ must be in
 $[x_k, x_k + \eps)$,
 and therefore
either $j \in B_k$ or $j \in D_k(\eps) \setminus A_k$.
Assuming that $\eps$ was chosen
 small enough so that we also have
  $x_k + \eps \le y_k$, this implies that
\begin{equation}
\label{eq:C8.3.1}
1 - \lam_k = \sum_{j \notin A_k} w_j \1_{I_j}
= \sum_{j \in B_k} w_j \1_{I_j}
+  \sum_{j \in D_k(\eps) \setminus A_k} w_j \1_{I_j}
\quad \text{a.e.\ in $[x_k, x_k + \eps)$.}
\end{equation}
The first sum on the right hand side does not exceed
$\sum_{j \in B_k} w_j  = 1 - \lam_k - \eta$,
while the second sum does not exceed
$\sum_{j \in D_k(\eps)} w_j < \eta$. Hence
the right hand side cannot be equal to $1 - \lam_k$
on any set of positive measure.
We thus arrive at a contradiction, which shows that
we must have
$\sum_{j \in B_k} w_j = 1 - \lam_k$.

We now set  $A_{k+1} := A_k \cup B_k$,
then $A_{k+1}$ is a finite set. We first observe that
the intervals $\{ I_j \}$, $j \in A_{k+1}$,
have their endpoints in $\Theta$. Indeed,
if $j \in A_{k}$ then this holds by the inductive
hypothesis, while if $j \in B_k$ then the
left endpoint of $I_j$ is equal to $x_k$ which is
 in $\Theta$, while the right  endpoint
lies in the set $x_k + \{l_1, \dots, l_n\}$
which again consists of elements of $\Theta$.

Next, we consider the sum
$\sum_{j \in A_{k+1}} w_j \1_{I_j}$
and observe that it is again
a piecewise constant function,
all of whose discontinuity points lie in $\Theta$.
Moreover, we have
\begin{equation}
\label{eq:C8.3.2}
\sum_{j \in A_{k+1}} w_j \1_{I_j}
= \sum_{j \in A_k} w_j \1_{I_j}
= 1
\quad \text{a.e.\ in $[0, x_{k}]$,}
\end{equation}
and
\begin{equation}
\label{eq:C8.3.3}
\sum_{j \in A_{k+1}} w_j \1_{I_j}
= \sum_{j \in A_k} w_j \1_{I_j}
+  \sum_{j \in B_k} w_j \1_{I_j}
= \lam_k + (1 - \lam_k) = 1
\quad \text{a.e.\ in $[x_k, y_k]$.}
\end{equation}
It follows that there exists a point $x_{k+1} \in \Theta$
satisfying  $x_k < y_k \le x_{k+1} \le 1$ and such that
$\sum_{j \in A_{k+1}} w_j \1_{I_j} = 1$
a.e.\ in $[0, x_{k+1}]$, while
$\sum_{j \in A_{k+1}} w_j \1_{I_j} < 1$ a.e.\ in $[x_{k+1}, 1]$.
We conclude that the conditions
\ref{it:pki}, \ref{it:pkii}, \ref{it:pkiii}
are satisfied with $k$ replaced by $k+1$.

Our inductive process may continue as long as $x_k < 1$.
Since $\Theta$ is a finite set, it follows that there
must exist $N$ such that
$x_N = 1$. Hence $|I| = x_N $ is an element of
the set $ \Theta$, which yields condition \ref{it:sfi}.
Moreover, condition
 \ref{it:pkii} with $k=N$ implies that
$ \sum_{j \in A_N} w_j \1_{I_j} = \1_I$ a.e.,
hence the intervals $\{I_j\}$, $j \in A_N$, in fact
constitute all the intervals in the system, and so
condition \ref{it:sfii} of the theorem is established. Finally,
by condition \ref{it:pki} with $k=N$, each one of
the endpoints of any of the intervals $I_j$ must
lie in  $\Theta$, which shows that also
condition \ref{it:sfiii} holds,
and completes the proof.
\end{proof}

The next result is a version of
\thmref{thmG3} for semi-infinite intervals.

\begin{thm}
\label{thmG6}
Let $I  = (a, + \infty) \sbt \R$ be an open half-line,
and suppose that
\begin{equation}
\label{eq:C18.1}
\1_I  = \sum_{j=1}^{\infty} w_j \1_{I_j} \quad \text{a.e.}
\end{equation}
where $I_j  \sbt \R$ are bounded open intervals,
and $w_j > 0$.
Assume that the lengths of the intervals
$I_j$ belong to some finite set
of positive real numbers $L = \{l_1, \dots, l_n\}$. Then,
\begin{enumerate-roman}
\item \label{it:sli}
each one of
the endpoints of any of the intervals $I_j$
is representable in the form
$a + \sum_{i=1}^{n} p_i l_i$
where $p_i$ are nonnegative integers;
\item \label{it:slii}
any bounded subset of $\R$ contains
only finitely many distinct intervals $I_j$.
\end{enumerate-roman}
\end{thm}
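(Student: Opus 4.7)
The plan is to adapt, with suitable modifications, the inductive construction used in the proof of \thmref{thmG3}. Assume without loss of generality that $a = 0$, and again put $\Theta := \{\sum_{i=1}^{n} p_i l_i : p_i \in \Z_{\ge 0}\}$. A key observation is that although $\Theta$ is typically infinite, the intersection $\Theta \cap [0, T]$ is finite for every $T > 0$, since each coefficient $p_i$ is bounded by $T/l_i$.

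A preliminary step is to establish \emph{local} finiteness of the weights: for every bounded $K \sbt \R$, one has $\sum_{j : I_j \cap K \neq \emptyset} w_j < +\infty$. This follows from the identity $\sum_j w_j \1_{I_j} = 0$ a.e.\ on $(-\infty, 0)$, which forces $I_j \sbt [0, \infty)$ for every $j$; any $I_j$ meeting $K$ is then contained in a suitable bounded enlargement $K' \sbt [0, \infty)$, and integrating the tiling identity over $K'$ together with $|I_j| \ge \min_i l_i$ yields the desired bound.

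Next, I would construct inductively a strictly increasing sequence $0 = x_0 < x_1 < x_2 < \dots$ in $\Theta$ together with nested finite index sets $A_0 \sbt A_1 \sbt \dots$ satisfying the exact analogues of conditions (a), (b), (c) from the proof of \thmref{thmG3}: the intervals $\{I_j\}_{j \in A_k}$ have endpoints in $\Theta$; $\sum_{j \in A_k} w_j \1_{I_j} = 1$ a.e.\ on $[0, x_k]$; and this partial sum is strictly less than $1$ a.e.\ on $(x_k, \infty)$. The inductive step mirrors the bounded case: at $x_k$ the current sum equals some constant $\lam_k < 1$ on a small interval $(x_k, y_k)$ with $y_k \le x_k + \min_i l_i$; the intervals starting exactly at $x_k$ (at most $n$ of them, since all lengths belong to $L$) have weights summing precisely to $1 - \lam_k$; and they are appended to form $A_{k+1}$. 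The $\eta$-$\eps$ argument controlling intervals whose left endpoint lies in $(x_k, x_k + \eps)$ now uses dominated convergence against the local weight bound above, in place of the global summability $\sum_j w_j < +\infty$ available in the bounded case.

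Since the $x_k$ form a strictly increasing sequence inside the locally finite set $\Theta$, either the construction terminates in finitely many steps or $x_k \to +\infty$. For any bounded $K \sbt \R$ I choose $N$ with $x_N > \sup K$; subtracting condition (b) from the full tiling identity shows that $\sum_{j \notin A_N} w_j \1_{I_j} = 0$ a.e.\ on $[0, x_N]$, forcing every $I_j$ with $j \notin A_N$ to satisfy $I_j \sbt (x_N, \infty)$, and hence to miss $K$. This proves \ref{it:slii}. Assertion \ref{it:sli} then follows because each individual $I_j$ must ultimately belong to some $A_N$ (apply the same argument with $N$ so large that $x_N$ exceeds the right endpoint of $I_j$), and by condition (a) the endpoints of intervals in $A_N$ lie in $\Theta$. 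The main technical point I anticipate is the local weight estimate underlying the inductive step: in the bounded setting of \thmref{thmG3} the global sum $\sum_j w_j$ is finite, whereas here it is generally infinite, so the $\eps$-control must be re-routed through the fact that the intervals $I_j$ are confined to $[0, \infty)$ and have length at least $\min_i l_i$.
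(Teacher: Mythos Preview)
Your proposal is correct and follows essentially the same approach as the paper's proof: reduce to $a=0$, establish local summability of the weights, run the inductive construction of \thmref{thmG3} to obtain $x_k \in \Theta$ with $x_k \to +\infty$ (since $\Theta$ is locally finite), and conclude that every $I_j$ eventually lies in some $A_N$ and hence has endpoints in $\Theta$. If anything you are slightly more explicit than the paper about the one genuine modification needed, namely that the $\eta$--$\eps$ step in the inductive construction must appeal to local rather than global summability of the $w_j$.
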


\begin{proof}
This can be proved in a similar way as \thmref{thmG3}.
We now proceed with the details.
Again we denote
$l := \min \{l_1, \dots, l_n\}$.
First, we note that if $x \in (a, +\infty)$ then
\begin{equation}
\label{eq:C19.1}
x  - a = \int \1_{(a, x )} \ge
 \sum_{I_j \sbt (a,x)} w_j \int \1_{I_j} \ge
 l \sum_{I_j \sbt (a,x)} w_j,
\end{equation}
hence
the sum of the weights $w_j$
 over all the intervals $I_j$ contained in any fixed interval
 $(a,x)$,  must be finite.
Second, again by replacing coinciding intervals
with a single interval, we may assume
 with no loss of generality
 that the intervals $I_j$ are distinct.

We may also suppose that $a = 0$, and therefore $I = (0,+\infty)$.

 Let $\Theta$ be the set
of all real numbers which are representable in the form
$\sum_{i=1}^{n} p_i l_i$
where $p_i$ are nonnegative integers.
In other words, $\Theta$ is the semigroup generated by the numbers
$l_1, \dots, l_n$. We note that $\Theta$ is a locally finite
subset of $[0, + \infty)$.

Let us now perform the inductive construction from the
proof of \thmref{thmG3}. It yields finite sets
$A_k$ and points $x_k \in \Theta$ with
\begin{equation}
\label{eq:C14.4}
0 = x_0 < x_1 < x_2 < \dots,
\end{equation}
 such that the conditions
\ref{it:pki}, \ref{it:pkii}, \ref{it:pkiii}
are satisfied for each $k$.
The fact that $\Theta$ is a locally finite set,
implies that we must have $x_k \to + \infty$.

We now observe that the intervals
$\{ I_j \}$, $j \in \bigcup_{k=0}^{\infty} A_k$,
must in fact constitute all the intervals in the system.
Indeed, given any $j_0$ we can find $k$ such
that $I_{j_0} \sbt (0, x_k)$. Due to condition
 \ref{it:pkii}, we have
\begin{equation}
\label{eq:C14.7}
1 = \sum_{j \in A_k} w_j \1_{I_j} \le
\sum_{j} w_j \1_{I_j}
= 1 \quad \text{a.e.\ in $[0, x_k]$,}
\end{equation}
which implies that the system $\{I_j\}$, $j \in A_k$,
contains all the intervals $I_j$ that intersect $[0, x_k]$.
In particular, $j_0 \in A_k$
which establishes our claim.
As a consequence, it follows from
 condition \ref{it:pki} that each one of
the endpoints of any of the intervals $I_j$ must
lie in the locally
finite set $\Theta$. This implies both
\ref{it:sli} and \ref{it:slii} and concludes the proof.
\end{proof}

\begin{remark}
An analogous result is true also for
 an open half-line of the form
$I  = (- \infty, a)$, which can be proved in a similar way.
\end{remark}

\subsection{Weak tiling by a finite union of intervals}
We now apply the previous results to the case where
$\Om \sbt \R$ is
 a finite union of intervals that weakly tiles
its complement.

\begin{thm}
\label{thmG2}
Let  $\Om = \bigcup_{i=1}^{n} (a_i, b_i)$,
$a_1 < b_1 < a_2 < b_2 < \dots < a_n < b_n$,
and assume that $\Om$ weakly tiles its complement.
 Then each gap length $a_{k+1} - b_k$
is representable in the form
$ \sum_{i=1}^{n} p_i (b_i - a_i)$
where $p_i$ are nonnegative integers.
\end{thm}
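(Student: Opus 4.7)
The plan is to combine \thmref{thmG1} with \thmref{thmG3} in order to reduce weak tiling of $\Om^\cm$ by translates of $\Om$ to weak tiling of each bounded gap by a system of intervals whose lengths lie in the finite set $L = \{b_1 - a_1, \dots, b_n - a_n\}$. Once the problem is cast in this form, \thmref{thmG3} delivers the desired conclusion directly.

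First I would invoke \thmref{thmG1} to conclude that any weak tiling measure $\nu$ for $\Om$ is pure point, so $\nu = \sum_t c_t \del_t$ with $c_t > 0$. The weak tiling identity then reads $\1_{\Om^\cm} = \sum_t c_t \1_{\Om + t}$ a.e., and in particular for every atom $t$ of $\nu$ the translate $\Om + t$ is contained in $\Om^\cm$.

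Next I would fix an index $1 \le k \le n-1$ and the corresponding bounded gap $I = (b_k, a_{k+1})$, and restrict the weak tiling identity to $I$. The key geometric observation is that if a component $(a_i + t, b_i + t)$ of $\Om + t$ meets $I$, then it must be entirely contained in $I$; otherwise it would protrude into one of the adjacent components $(a_k, b_k)$ or $(a_{k+1}, b_{k+1})$ of $\Om$, contradicting $\Om + t \sbt \Om^\cm$. Consequently, for each atom $t$ the intersection $(\Om + t) \cap I$ is a disjoint union of complete translated components $(a_i + t, b_i + t)$, and the restriction of the weak tiling identity to $I$ takes the form
\begin{equation*}
\1_I = \sum_j w_j \1_{I_j} \quad \text{a.e.,}
\end{equation*}
where the sum (finite or countable) runs over all pairs $(t, i)$ with $(a_i + t, b_i + t) \sbt I$, each weight $w_j = c_t$ is strictly positive, and each interval $I_j = (a_i + t, b_i + t)$ has length $b_i - a_i \in L$.

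Finally I would apply \thmref{thmG3}\ref{it:sfi} to this decomposition to conclude that $|I| = a_{k+1} - b_k$ is of the form $\sum_{i=1}^n p_i (b_i - a_i)$ with nonnegative integers $p_i$, which is exactly the statement. The main obstacle is conceptual rather than technical: one must recognize that the correct reduction is to a weak tiling of a single bounded gap, and that the geometric obstruction forces each translated component of $\Om$ either to miss $I$ or to lie fully inside $I$. No convergence or measure-theoretic subtlety arises, since \thmref{thmG3} explicitly permits the sum $\sum_j w_j \1_{I_j}$ to be infinite, and \thmref{thmG1} has already eliminated any continuous part of the tiling measure.
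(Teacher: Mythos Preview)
Your proof is correct and follows essentially the same route as the paper's: invoke \thmref{thmG1} to reduce to a pure point weak tiling measure, restrict to a fixed gap $I = (b_k, a_{k+1})$, observe that each translate $\Om + t$ intersects $I$ in a union of full components of length in $L$, and then apply \thmref{thmG3}\ref{it:sfi}. Your explicit justification of why no translated component can straddle an endpoint of $I$ is a welcome elaboration of a step the paper leaves implicit.
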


In particular, this result applies if $\Om$
tiles the space (properly) by translations, or if $\Om$ is a spectral set.
In the  latter case, the result recovers
\thmref{thmD5.4} above.

\begin{proof}
Let $\nu$ be a weak tiling measure for $\Om$, then
$\1_\Om \ast \nu = \1_{\Om^\complement}$
a.e. By \thmref{thmG1}, $\nu$ is
 a pure point measure, hence
$\1_{\Om^\cm} = \sum_t \nu(t) \1_{\Om + t}$ a.e., where
$t$ goes through the atoms of $\nu$.
In particular, this implies that if $t$ is an atom of $\nu$, then
$\Om + t \sbt \Om^\cm$.

If we now fix one of the gaps $I = (b_k, a_{k+1}) $, then
$\1_{I} = \sum_t \nu(t) \1_{I \cap (\Om + t)}$ a.e.
If $t$ is an atom of $\nu$, then
the set $I \cap (\Om + t)$
consists of finitely many disjoint open intervals
whose lengths belong to the
finite set $L = \{b_1-a_1, \dots, b_n-a_n\}$.
An application of \thmref{thmG3} thus
allows us to conclude
that the gap length $ a_{k+1} - b_k $ is
representable in the form
$ \sum_{i=1}^{n} p_i (b_i - a_i)$,
where $p_i$ are nonnegative integers.
\end{proof}

Next, we derive some consequences
on the support of the weak tiling measure.

\begin{thm}
\label{thmG7}
Let  $\Om = \bigcup_{i=1}^{n} (a_i, b_i)$,
$a_1 < b_1 < a_2 < b_2 < \dots < a_n < b_n$,
and assume that $\nu$ is a weak tiling measure for $\Om$.
Then the support of $\nu$ is contained in  the locally finite set
consisting of all the nonzero
real numbers which are representable in the form
\begin{equation}
\label{eq:D3.3.8}
\pm \sum_{i=1}^{n} p_i (b_i-a_i)
\end{equation}
where $p_i$
are nonnegative integers.
\end{thm}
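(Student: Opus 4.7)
The plan is to combine \thmref{thmG1}, which gives that $\nu$ is pure point, with the half-line analysis of \thmref{thmG6} (and its variant for $(-\infty,a)$ from the subsequent remark). By symmetry it suffices to treat positive atoms; negative atoms are handled identically on the left.

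First I would show that for each atom $t>0$ of $\nu$, the translate $\Om+t$ is literally contained in $\Om^\cm$: the weak tiling identity forces $|\Om\cap(\Om+t)|=0$, and since both sets are finite unions of open intervals this yields $\Om\cap(\Om+t)=\emptyset$. Hence each translated open interval $(a_i+t,b_i+t)$ avoids $\Om$, and cannot straddle a boundary point $a_j$ or $b_j$ of $\Om$, since on one side of such a point a small neighborhood enters $\Om$. So each $(a_i+t,b_i+t)$ lies entirely in one connected component of $\Om^\cm$, namely one of $(-\infty,a_1)$, some gap $(b_k,a_{k+1})$, or $(b_n,+\infty)$. Specialising to $i=n$ and $t>0$, this forces $(a_n+t,b_n+t)\sbt(b_n,+\infty)$, and in particular $t\ge b_n-a_n$.

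Next I would restrict the weak tiling identity $\1_\Om\ast\nu=\1_{\Om^\cm}$ to the half-line $(b_n,+\infty)$. Let $T$ be the set of positive atoms of $\nu$ and $P_t:=\{i:a_i+t\ge b_n\}$. The preceding step yields the clean identity
\begin{equation*}
\1_{(b_n,+\infty)} \;=\; \sum_{t\in T}\;\sum_{i\in P_t} \nu(\{t\})\,\1_{(a_i+t,\,b_i+t)} \quad \text{a.e.,}
\end{equation*}
in which each interval on the right is bounded, open, contained in $(b_n,+\infty)$, carries positive weight, and has length in the finite set $L=\{b_1-a_1,\dots,b_n-a_n\}$. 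Applying \thmref{thmG6} with $a=b_n$ places every endpoint of every such interval in $b_n+\Theta$, where $\Theta$ denotes the semigroup of nonnegative integer combinations of the $b_i-a_i$. Taking $i=n\in P_t$ gives $a_n+t\in b_n+\Theta$, so
\begin{equation*}
t = (b_n-a_n) + \bigl((a_n+t)-b_n\bigr) \in \Theta.
\end{equation*}
The analogous argument on $(-\infty,a_1)$ places every negative atom in $-\Theta$, and $\Theta\cup(-\Theta)$ is locally finite because each $b_i-a_i>0$. The main obstacle I anticipate is the clean-decomposition step: without ruling out translated intervals that straddle $b_n$ or another boundary point, the restriction to $(b_n,+\infty)$ would feature truncated intervals whose lengths depend on $t$, and \thmref{thmG6} would no longer apply.
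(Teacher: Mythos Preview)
Your argument is correct and follows essentially the same route as the paper: invoke \thmref{thmG1} for pure-pointedness, observe that $\Om+t\sbt\Om^\cm$ forces each translated interval into a single complementary component, restrict to the half-line $(b_n,+\infty)$, and apply \thmref{thmG6} to the interval $(a_n+t,b_n+t)$ to place $t$ in the semigroup $\Theta$. The only cosmetic differences are that the paper reads off $t\in\Theta$ directly from the \emph{right} endpoint $b_n+t\in b_n+\Theta$ rather than the left one, and explicitly notes that $0$ cannot be an atom of $\nu$.
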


\begin{proof}
Let  $\Theta$ denote the set
of all real numbers  which are representable in the form
$\sum_{i=1}^{n} p_i (b_i - a_i)$
where $p_i$ are nonnegative integers, that is,
$\Theta$ is the
semigroup generated by the numbers
$b_1-a_1, \dots, b_n-a_n$.
Then $\Theta$ is a locally finite
subset of $[0, + \infty)$.

By \thmref{thmG1}, $\nu$ is
a pure point measure, hence
$\1_{\Om^\cm} = \sum_t \nu(t) \1_{\Om + t}$ a.e., where
$t$ goes through the atoms of $\nu$.
In particular, for any atom $t$ we have
$\Om + t \sbt \Om^\cm$.

Now consider the half-line
$I = (b_n, + \infty)$ which is contained in $\Om^\cm$, then we have
$\1_{I} = \sum_t \nu(t) \1_{ I \cap (\Om + t) }$ a.e.
We note that in this sum  it  suffices that $t$ only runs through the
 atoms of $\nu$ that are contained in $(0, + \infty)$,
because otherwise $\Om + t $ does not intersect $I$.
For any such $t$, the set $ I \cap (\Om + t) $ is a
union of several of the intervals
\begin{equation}
\label{eq:D3.3}
(a_1 + t, b_1 +t), \quad
(a_2 + t, b_2 +t), \quad
\dots, \quad
(a_n + t, b_n +t),
\end{equation}
whose lengths belong to the
finite set $L = \{b_1-a_1, \dots, b_n-a_n\}$.
In particular, the last interval
$(a_n + t, b_n +t)$ must be one of the components
of $I \cap  (\Om + t)$.
An application of \thmref{thmG6} then
yields that each one of
the endpoints of any of the intervals
that constitute the set $ I \cap (\Om + t) $,
is an element of  $b_n + \Theta$.
In particular, this is the case for the right
endpoint of the interval $(a_n + t, b_n +t)$,
which means that $t \in \Theta$.

We have thus shown that $\supp(\nu) \cap (0, + \infty)$
must be a subset of $ \Theta$. In a similar way, it
follows that
$\supp(\nu) \cap (- \infty, 0)$ is contained in $- \Theta$.
Since the origin obviously cannot be an atom of $\nu$,
due to the weak tiling assumption, we conclude that
\begin{equation}
\label{eq:D3.8.3}
\supp(\nu) \sbt ( \Theta \cup (-\Theta)) \setminus \{0\}.
\end{equation}
This means that the assertion of the theorem holds,
which completes the proof.
\end{proof}

\subsection{Some applications}

We now discuss several applications.
First, suppose that $\Om \sbt \R$
is a finite union of intervals with commensurable lengths. 
By rescaling, we may assume that the lengths are integers.
Theorems \ref{thmG2} and \ref{thmG7} then yield the following:

\begin{thm}
\label{thmG8}
Let  $\Om \sbt \R$ be a finite union of disjoint intervals with integer lengths.
If $\Om$ weakly tiles its complement, then
\begin{enumerate-roman}
\item \label{wtil:i}
all the gaps between the intervals must also have integer lengths; 
\item \label{wtil:ii}
any weak tiling measure $\nu$ for $\Om$ satisfies
 $\supp(\nu) \sbt \Z \setminus \{0\}$.
\end{enumerate-roman}
\end{thm}

Next, assume that $\Om$ is just a single interval.
In this case, we have:

\begin{thm}
\label{thmG9}
If $\Om  = (a,b)$ is a single interval, then any
weak tiling of the complement is necessarily a proper tiling.
\end{thm}

\begin{proof}
We may assume that $\Om  = (0,1)$ is the unit interval. 
If $\nu$ is a weak tiling measure for $\Om$,  then 
$\supp(\nu) \sbt \Z \setminus \{0\}$
by \thmref{thmG7}. Hence
$\1_{\Om^\cm}  = \sum_{n \neq 0} \nu(n) \1_{\Om + n}$.
But the translates
$\Om + n$ are pairwise disjoint intervals, so this is possible
only if $\nu(n) = 1$ for every $n \in \Z \setminus \{0\}$.
This shows that
$\nu = \sum_{n \in \Z \setminus \{ 0 \}} \delta_{n}$,
 so we have a proper tiling.
\end{proof}

Lastly, consider the case where $\Om$ is a union of two intervals.
The following observation was communicated 
to us by the anonymous reviewer.

\begin{thm}
\label{thm:alt}
Let $\Om \sbt \R$ be the union of two disjoint
intervals, and 
assume that $\Om$ weakly tiles its complement. 
If the two intervals have different lengths, then
the weak tiling is necessarily a proper tiling.
\end{thm}

\begin{proof}
After applying a
reflection and translation if necessary, we may assume that 
$\Om = (0,h) \cup (a,b)$ where $0 < h < a < b$, and 
the length $l = b-a$ is greater than $h$.
Let $\nu$ be a weak tiling measure
for $\Om$, then $\nu$ is a pure point
measure whose support is a locally finite set
(due to \thmref{thmG7}). We make the following observation:

\emph{Claim}.
Suppose that $\mu$ is a positive measure on $\R$
with a locally finite support,   such that
$\1_\Om \ast \mu = 1$ a.e.
If $\mu$ has a unit atom at some point $x$,
then also $x + h - a$ and $x + b$ are 
unit atoms of $\mu$.

We now prove the claim. By applying a translation we may assume that $x=0$.

Let us look at the gap $(h,a)$ between the two intervals
$(0,h)$ and $(a,b)$ composing $\Om$.
We observe that a small right neighborhood $(h, h+ \eps)$
of the point $h$ can be covered in the weak tiling
by only two translated copies $\Om + h$ and $\Om + h - a$.
However the 
translated copy $\Om + h$ overlaps with $\Om$,
because the length $l = b-a$ of the interval 
$(a,b)$ is greater than $h$. This implies
that the point $h$ cannot be an atom of $\mu$, 
and hence the point $h-a$ must be a unit atom of $\mu$.

Next, we look at a small right neighborhood $(b, b + \eps)$
of the point $b$. It can again be covered
 in the weak tiling by  only two
translated copies $\Om + b$ and $\Om + b - a$. 
However the 
translated copy $\Om + b-a$ overlaps with $\Om+h-a$.
Since we have already shown that
the point $h-a$ is a unit atom of $\mu$,
this implies
that the point $b-a$ cannot be an atom of $\mu$.
In turn, it follows that the point $b$ must be a unit atom of $\mu$.

We have thus proved the claim.  If we now iteratively 
apply the claim to the measure $\mu := \delta_0 + \nu$,
it implies that in the weak tiling,
whenever there is a short interval of weight one,
then it must be followed by a long interval of weight one;
and similarly, whenever there is a long interval of weight one,
it must be followed by a short interval of weight one.
This is easily seen to force the weak tiling to actually be a proper tiling, 
with the intervals following each other in an alternating manner.
\end{proof}

As a consequence of the last result, we obtain the following:

\begin{thm}
\label{thm:two-intervals}
Let $\Om \sbt \R$ be the union of two disjoint
intervals. If $\Om$ weakly tiles its complement,
then $\Om$ can also tile properly.
\end{thm}

Indeed, if the two intervals have different lengths,
this follows from \thmref{thm:alt}. 
If the   intervals have the same length, then 
by  \thmref{thmG2},
the length of the gap between the two intervals 
must be an integral multiple of the common
length of the intervals. But in this case it
is easy to see that $\Om$ admits a proper tiling.

Since Fuglede's conjecture is known to 
hold if $\Om \sbt \R$ is a  union of two intervals, see \cite{Lab01},
we obtain that in this case, 
spectrality, tiling, and weak tiling of the complement
are equivalent properties.


\section{Open problems}
\label{sec4}

We conclude the paper with a few open problems.

\subsection{}
We say that a set $\Lam \sbt \R$ has \emph{bounded density}
if we have
\begin{equation}
\label{eq:D3.8.22}
\sup_{x \in \R} \# (\Lam \cap (x, x+1)) < + \infty.
\end{equation}
Let $\Om \sbt \R$ be a finite union of intervals,
and let $\nu$ be a weak tiling measure for $\Om$.
Is it true that
$\supp(\nu)$ must be a set of bounded density?

\subsection{}
Let $\Om \sbt \R$ be a finite union of three or more intervals.
If $\Om$ weakly tiles its complement,
must it also tile properly? A positive answer
would imply that the ``spectral implies tile''
direction of Fuglede's conjecture is true for any
finite union of intervals.

\subsection{}
Let $\Om \sbt \R$ be a finite union of intervals,
and let $\nu$ be a weak tiling measure for $\Om$.
Is it true that $\nu$ must be a convex linear combination
of proper tilings? If it is the case, then this implies a positive
answer also to the previous question.

A similar question for convex polytopes in $\R^d$ was posed
in \cite[Section 7.3]{KLM23}.

\subsection*{Acknowledgment}
We thank the anonymous reviewer 
for communicating  to us the observation in \thmref{thm:alt},
as well as
for their insightful comments and suggestions 
to improve the presentation of the manuscript.


\end{document}